\setlist{nolistsep, noitemsep}
\titleformat{\subsubsection}[runin]{\normalfont\normalsize\bfseries}{\thesubsection.}{1pt}{}
\newtheorem{thm}{Theorem}[section]
\newtheorem{lemma}[thm]{Lemma}
\newtheorem{prop}[thm]{Proposition}
\newtheorem{conj}[thm]{Conjecture}
\theoremstyle{definition}
\newtheorem{define}[thm]{Definition}
\DeclareMathOperator{\Save}{Save}
\DeclareMathOperator{\Gap}{Gap}
\DeclareMathOperator{\ch}{ch}
\DeclareMathOperator{\ad}{ad}
\DeclareMathOperator{\mad}{mad}
\newcommand{\eps}{\ensuremath{\varepsilon}}
\newcommand{\Prob}[1]{\ensuremath{%
    \mathbb P\left[#1\right]
}}
\newcommand{\ProbCond}[2]{\ensuremath{%
    \mathbb P\left[#1\:\middle|\:#2\right]
  }}
\newcommand{\Expect}[1]{\ensuremath{%
    \mathbb E\left[#1\right]
  }}
\newcommand{\savings}{\ensuremath{\ensuremath{\mathbf{savings}}}}
\newcommand{\aberrance}{\ensuremath{\mathbf{unmatched}}}
\newcommand{\totesabs}{\ensuremath{%
    \mathbf{unmatched}^{\mathrm{tot}}
  }}
\newcommand{\pairs}{\ensuremath{\mathbf{pairs}}}
\newcommand{\totespairs}{\ensuremath{%
    \mathbf{pairs}^{\mathrm{tot}}
  }}
\newcommand{\trips}{\ensuremath{\mathbf{trips}}}
\newcommand{\totestrips}{\ensuremath{%
    \mathbf{trips}^{\mathrm{tot}}
  }}
\newcommand{\logexp}{\ensuremath{10}}
\newcommand{\aberrant}{\ensuremath{\mathrm{Lord}}}
\newcommand{\slaberrant}{\ensuremath{\mathrm{Slightly\mbox-Lord}}}
\newcommand{\egal}{\ensuremath{\mathrm{Egal}}}
\newcommand{\subserv}{\ensuremath{\mathrm{Subserv}}}
\newcommand{\unact}{\ensuremath{\mathbf{inactive}}}
\newcommand{\uncolvtcs}{\ensuremath{U}}
\newcommand{\ordering}{\ensuremath{\prec}}
\title{On the density of critical graphs with no large cliques}
\author{Tom Kelly\thanks{
    School of Mathematics, Georgia Institute of Technology, Atlanta 30332, USA.
    Email: \texttt{tom.kelly@gatech.edu}.  Partially supported by the EPSRC, grant no. EP/N019504/1.}
  \and
  Luke Postle\thanks{
    Department of Combinatorics and Optimization, University of Waterloo, Waterloo, Ontario N2L 3G1, Canada.
    Email: \texttt{lpostle@uwaterloo.ca}.
    Partially supported by NSERC under Discovery Grant No.\ 2019-04304, the Ontario Early Researcher Awards program and the Canada Research Chairs program.}
}
\date{}
\begin{document}

\renewcommand{\thefootnote}{\fnsymbol{footnote}} 
\footnotetext{\textit{Mathematics Subject Classification (2010):} 05C15, 05D40}     
\renewcommand{\thefootnote}{\arabic{footnote}}

\maketitle

\begin{abstract}
  A graph $G$ is \textit{$k$-critical} if $\chi(G) = k$ and every proper subgraph of $G$ is $(k - 1)$-colorable, and if $L$ is a list assignment for $G$, then $G$ is \textit{$L$-critical} if $G$ is not $L$-colorable but every proper subgraph of $G$ is.
  In 2014, Kostochka and Yancey proved a lower bound on the average degree of an $n$-vertex $k$-critical graph tending to $k - \frac{2}{k - 1}$ for large $n$ that is tight for infinitely many values of $n$, and they asked how their bound may be improved for graphs not containing a large clique.
  Answering this question, we prove that there exists some $\varepsilon > 0$ for which the following holds.  If $k$ is sufficiently large and $G$ is a $K_{\omega + 1}$-free $L$-critical graph where $\omega \leq k - \log^{10}k$ and $L$ is a list assignment for $G$ such that $|L(v)| = k - 1$ for all $v\in V(G)$, then the average degree of $G$ is at least $(1 + \varepsilon)(k - 1) - \varepsilon \omega - 1$.  This result implies that for some $\varepsilon > 0$, for every graph $G$ satisfying $\omega(G) \leq \mathrm{mad}(G) - \log^{10}\mathrm{mad}(G)$ where $\omega(G)$ is the size of the largest clique in $G$ and $\mathrm{mad}(G)$ is the maximum average degree of $G$, the list-chromatic number of $G$ is at most $\left\lceil (1 - \varepsilon)(\mathrm{mad}(G) + 1) + \varepsilon\omega(G)\right\rceil$.
\end{abstract}

\section{Introduction}


A \textit{proper coloring} of a graph $G$ is an assignment of colors to the vertices of $G$ in which adjacent vertices receive different colors, and the \textit{chromatic number} of $G$, denoted $\chi(G)$, is the fewest number of colors needed to properly color $G$.  A graph is $k$-critical if it has chromatic number $k$ and every proper subgraph has chromatic number at most $k - 1$.  The study of critical graphs is fundamental to chromatic graph theory, and the study of their density is of particular interest.


\subsection{Density of critical graphs}

Let $f_{k}(n)$ denote the minimum average degree of a $k$-critical graph on $n$ vertices.  Dirac~\cite{D57} in 1957 first asked to determine $f_k(n)$, and Gallai~\cite{G63-1, G63-2} and Ore~\cite{O67} later reiterated the question.  The problem also appeared in Jensen and Toft's~\cite{JT95} book of two hundred graph coloring problems.  Trivially, $f_k(n) \geq k - 1$, and Brooks' Theorem~\cite{B41} implies that for $k\geq 4$ this inequality is strict, unless the graph is complete, that is unless $n = k$.  Following classical improvements by Dirac~\cite{D57} and Gallai~\cite{G63-1, G63-2} and further improvements by Krivelevich~\cite{K97} and Kostochka and Stiebitz~\cite{KS99} in the 1990s, Kostochka and Yancey \cite{KY14} essentially resolved the problem in 2014 by proving the bound $f_k(n) \geq k - \frac{2}{k - 1} - \frac{k^2 - 3k}{n(k - 1)}$, which is tight for every $n\equiv 1 \mod k - 1$ as shown by Ore~\cite{O67}.

Kostochka and Yancey~\cite{KY14} and Jensen and Toft~\cite{JT95} also asked to determine $f_k(n, \omega)$, the minimum average degree of a $k$-critical graph on $n$ vertices with no clique of size greater than $\omega$.  Kostochka and Stiebitz~\cite{KS00} proved that $f_k(n, r) \geq 2k - o(k)$ as $k\rightarrow\infty$ for any fixed $r$.  Krivelevich~\cite{K97} proved that for any $\alpha\in (0, 1)$, $f_k(n, \alpha k) \geq k - 1/(2 - \alpha) - o(1)$; however, this bound is not better than the bound of Kostochka and Yancey~\cite{KY14} on $f_k(n)$.  In~\cite{KP18}, we proved that for every $\alpha \in (0, 1/2)$, there exists $\varepsilon > 0$ such that $f_k(n, \alpha k) > (1 + \varepsilon)(k - 1)$. In this paper, we extend this result for $\alpha \in (1/2, 1)$, which is the substantially more difficult half of the interval.  In fact, we show that the dependence of $\varepsilon$ on $\alpha$ is linear, and our result provides the best known bound on $f_k(n, \omega)$ for $k/2 \leq \omega \leq k - \log^{10}k$, as follows.

\begin{restatable}{thm}{mainThm}
\label{main avg degree bound}
  There exists $\varepsilon > 0$ such that the following holds.  If $k$ is sufficiently large and $\omega \leq k - \log^{10}k$, then
  \begin{equation}\label{main bound equation}
    f_k(n, \omega) > (1 + \varepsilon)(k - 1) - \varepsilon\omega - 1.
  \end{equation}
\end{restatable}

We made no attempt to optimize the value of $\varepsilon$ in this paper for the sake of clarity in the proof, but there is room for improvement.  It is possible that Theorem~\ref{main avg degree bound} holds for $\varepsilon = 1$, although we do not expect to obtain a value of $\varepsilon$ close to that with our current methods.  We believe Theorem~\ref{main avg degree bound} actually holds for any $\omega \leq k$.

We actually derive Theorem~\ref{main avg degree bound} from a more general result, Theorem~\ref{nice local critical thm}, which can be thought of as a local strengthening of Theorem~\ref{main avg degree bound}.  Proving this stronger result is also crucial to the proof.  Before presenting our main result, we discuss an important connection to Reed's $\omega, \Delta, \chi$ Conjecture~\cite{R98}.

\subsection{$\omega, \chi$, and maximum average degree}

In 1998, Reed~\cite{R98} famously conjectured that every graph $G$ satisfies $\chi(G) \leq \left\lceil \frac{1}{2}(\Delta(G) + 1 + \omega(G))\right\rceil$, where $\Delta(G)$ is the maximum degree of a vertex in $G$ and $\omega(G)$ is the size of a largest clique.  Reed~\cite{R98} proved what we call an ``epsilon version'' of his conjecture: for some small $\varepsilon > 0$, every graph $G$ satisfies $\chi(G) \leq \lceil (1 - \varepsilon)(\Delta(G) + 1) + \varepsilon\omega(G)\rceil$.  Recently, Bonamy, Perrett, and the second author~\cite{BPP18} improved the value of $\varepsilon$ in this result to $1/26$ for graphs of sufficiently large maximum degree, Delcourt and the second author~\cite{DP17} improved this further to $1/13$, and Hurley, de Joannis de Verclos, and Kang~\cite{HdVK20} improved this to $1/8.5$.

In~\cite{KP18}, we conjectured that Reed's Conjecture can be strengthened by replacing $\Delta(G)$ with the \textit{maximum average degree} of $G$, denoted $\mad(G)$, which is $\max_{H\subseteq G}\ad(H)$ where $\ad(H)$ is the average degree of $H$, as follows.

\begin{conj}[Kelly and Postle~\cite{KP18}]\label{mad reeds conjecture}
    For every graph $G$,
  \begin{equation*}
    \chi(G) \leq \left\lceil\frac{1}{2}\left(\mad(G) + 1 + \omega(G)\right)\right\rceil.
  \end{equation*}
\end{conj}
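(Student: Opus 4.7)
The plan is to reduce Conjecture~\ref{mad reeds conjecture} to a tight density lower bound on critical graphs, and then to strengthen the Kostochka--Yancey potential method using the local arguments developed for Theorem~\ref{main avg degree bound}.

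First I would invoke the standard reduction to critical graphs: if some $G$ violates the conjecture, take a $k$-critical subgraph $H \subseteq G$ where $k = \chi(G)$. Since $\ad(H) \leq \mad(H) \leq \mad(G)$ and $\omega(H) \leq \omega(G)$, the assumed failure of the inequality gives $2k - 1 - \omega(H) > \ad(H)$. So it suffices to establish the density bound
\[
  \ad(H) \geq 2k - 1 - \omega(H)
\]
for every $k$-critical graph $H$. Rearranged, this is exactly the statement of Theorem~\ref{main avg degree bound} with $\varepsilon = 1$ and with no restriction of the form $\omega \leq k - \log^{10}k$.

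The core step is therefore to push the density bound from the very small $\varepsilon$ of Theorem~\ref{main avg degree bound} up to $\varepsilon = 1$. I would combine the Kostochka--Yancey~\cite{KY14} potential $\rho(H) = (k+1)|V(H)| - 2|E(H)|$ with a target potential $\tau(H) = (2k - 2 - \omega(H))|V(H)| - 2|E(H)|$ encoding the desired bound, and then argue, following the schemes of \cite{KY14} and the present paper, that in a minimum counterexample one finds a proper induced subgraph $R$ whose $\tau$-potential is anomalously small. Kernel-perfect / Gallai-tree list-coloring techniques would be used to extend a $(k-1)$-coloring of $H \setminus R$ to $R$ via the residual palettes, contradicting criticality of $H$. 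The analysis of $R$ would rest on an enhancement of the local machinery behind Theorem~\ref{nice local critical thm}, using the absence of $K_{\omega+1}$ to rule out the Ore-type dense configurations that witness tightness when $\omega = k$.

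The main obstacle -- and the reason this paper attains only $\varepsilon \leq 2.6 \cdot 10^{-10}$ -- is the regime where $\omega$ is close to $k$. There the Ore construction is essentially tight for the unrestricted density problem, so every ounce of gain over Kostochka--Yancey must come from a finely quantitative exploitation of the single missing vertex in the would-be $K_{\omega+1}$. Pushing $\varepsilon$ from $10^{-10}$ all the way to $1$, and absorbing the $\log^{10}k$ additive slack on $\omega$, would require substantially new local-structural input, and I expect this final strengthening to be the dominant source of difficulty; new ideas beyond the tradeoff argument of the present paper appear to be needed.
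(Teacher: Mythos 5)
This statement is Conjecture~\ref{mad reeds conjecture}, which the paper states as an open conjecture attributed to Kelly and Postle~\cite{KP18}; the paper does not prove it and only establishes the ``epsilon version'' in Theorem~\ref{epsilon mad reeds} with $\varepsilon$ bounded well away from $1$. So there is no proof in the paper to compare against, and you are right to flag that a complete proof would require new ideas. Your reduction to a density lower bound for $k$-critical subgraphs is the standard and correct first step, and your assessment that the bottleneck is the regime $\omega$ close to $k$ (where the $\log^{10}k$ slack in the paper and the present proof technique both break down) matches the authors' own remarks after Theorem~\ref{main avg degree bound}.

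One small arithmetic slip worth fixing: you drop the ceiling and derive that a counterexample forces $\ad(H) < 2k - 1 - \omega(H)$, and then say it suffices to prove $\ad(H) \geq 2k - 1 - \omega(H)$, asserting this is ``exactly'' Theorem~\ref{main avg degree bound} with $\varepsilon = 1$. In fact $(1+\varepsilon)(k-1) - \varepsilon\omega - 1$ at $\varepsilon = 1$ equals $2k - 3 - \omega$, which is $2$ weaker than your target. The discrepancy disappears if you use the ceiling: failure of the conjecture means $\chi(G) \geq \left\lceil\frac{1}{2}(\mad(G)+1+\omega(G))\right\rceil + 1 \geq \frac{1}{2}(\mad(G)+1+\omega(G)) + 1$, which rearranges to $\mad(G) \leq 2k - 3 - \omega(G)$, so the correct target is the strict inequality $\ad(H) > 2k - 3 - \omega(H)$, i.e., precisely Theorem~\ref{main avg degree bound} at $\varepsilon = 1$ with no gap. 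That the conjecture corresponds cleanly to $\varepsilon = 1$ (not to something strictly stronger) is what makes the ``extend $\varepsilon$ to $1$'' framing the right one, and it is the framing the authors themselves use when they note after Theorem~\ref{main avg degree bound} that they believe the theorem should hold for $\varepsilon = 1$ and for all $\omega \leq k$.
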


Lower bounds on the density of critical graphs imply upper bounds on $\chi$ in terms of $\mad$.  Brooks' Theorem~\cite{B41} implies $\chi(G) < \mad(G) + 1$ unless $\omega(G) = \mad(G) + 1$ or $\mad(G) = 2$, and Kostochka and Yancey's~\cite{KY14} bound on $f_k(n)$ strengthens this bound on $\chi(G)$ to something tending to $\mad(G)$.  Kostochka and Stiebitz's~\cite{KS00} bound on $f_k(n, r)$ implies that if $G$ is $K_r$-free, then $\chi(G) \leq (1/2 + o(1))\mad(G)$, which confirms Conjecture~\ref{mad reeds conjecture} asymptotically in this case.
Using Theorem~\ref{main avg degree bound}, we derive an ``epsilon version'' of Conjecture~\ref{mad reeds conjecture}, as follows.

\begin{thm}\label{epsilon mad reeds}
  There exists $\varepsilon > 0$ such that the following holds.  For every graph $G$ such that $\omega(G) \leq \mad(G) - \log^{10}\mad(G)$,
  \begin{equation*}
    \chi(G) \leq \lceil(1 - \varepsilon)(\mad(G) + 1) + \varepsilon\omega(G)\rceil.
  \end{equation*}
\end{thm}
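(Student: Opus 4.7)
The plan is to derive the bound on $\chi(G)$ from Theorem \ref{main avg degree bound} via the standard passage to a critical subgraph. Let $k := \chi(G)$ and let $H$ be a $k$-vertex-critical subgraph of $G$; then $\omega(H) \le \omega(G)$, $\ad(H) \le \mad(G)$, and $H$ is $L$-critical for the list assignment $L$ defined by $L(v) = \{1,\ldots,k-1\}$ for every $v \in V(H)$. I would first observe that the hypothesis $\omega(G) \le \mad(G) - \log^{10}\mad(G)$ forces $\mad(G)$ to exceed the threshold at which $x - \log^{10} x$ becomes positive, so $\mad(G)$, and in turn $k \le \mad(G) + 1$, are both large enough to lie in the range where Theorem \ref{main avg degree bound} applies.

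Set $\varepsilon' := \varepsilon/(1+\varepsilon)$, where $\varepsilon = 2.6 \cdot 10^{-10}$ is the constant from Theorem \ref{main avg degree bound}. In the main case $\omega(H) \le k - \log^{10} k$, I would apply Theorem \ref{main avg degree bound} to $H$ with $\omega = \omega(H)$ to obtain
\[
\mad(G) \;\ge\; \ad(H) \;>\; (1+\varepsilon)(k-1) - \varepsilon\,\omega(H) - 1 \;\ge\; (1+\varepsilon)(k-1) - \varepsilon\,\omega(G) - 1,
\]
using $\omega(H) \le \omega(G)$ in the last step. Dividing by $1+\varepsilon$ and using $1/(1+\varepsilon) = 1 - \varepsilon'$ and $\varepsilon/(1+\varepsilon) = \varepsilon'$ rearranges this to
\[
k - 1 \;<\; (1-\varepsilon')(\mad(G)+1) + \varepsilon'\,\omega(G),
\]
from which the integrality of $k$ yields $\chi(G) = k \le \lceil (1-\varepsilon')(\mad(G)+1) + \varepsilon'\,\omega(G)\rceil$, as required.

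The main obstacle is the complementary case $\omega(H) > k - \log^{10} k$, where Theorem \ref{main avg degree bound} does not apply directly to $H$. To handle this, I would combine the hypothesis with $\omega(G) \ge \omega(H) > k - \log^{10} k$ to deduce $\mad(G) - \log^{10}\mad(G) > k - \log^{10} k$; since the function $x \mapsto x - \log^{10} x$ is strictly increasing for large $x$, this already forces $\mad(G) > k$. A quantitative refinement, bounding $\mad(G) - k$ from below in terms of the gap $\log^{10} k - (k - \omega(G))$ via the implicit relation $\mad(G) \ge \omega(G) + \log^{10}\mad(G)$, together with the hypothesis, then delivers the claimed bound. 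In the tightest subcase, where $\omega(H)$ is only marginally larger than $k - \log^{10} k$ and $\mad(G)$ is only marginally larger than $k$, I expect one must invoke the local strengthening Theorem \ref{nice local critical thm} advertised in the introduction to close the remaining gap with a uniform $\varepsilon'$.
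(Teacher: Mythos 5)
Your main-case analysis --- passing to a $k$-vertex-critical subgraph $H$ with $k := \chi(G)$, applying Theorem~\ref{main avg degree bound}, and rearranging via $\varepsilon' = \varepsilon/(1+\varepsilon)$ --- is correct and matches the paper's intended computation. The genuine gap is the complementary case $\omega(H) > k - \log^{10}k$, which your sketch does not close. The deduction $\mad(G) > k$ gives only an arbitrarily small margin: since $x \mapsto x - \log^{10}x$ has derivative close to $1$ for large $x$, the inequality $\mad(G) - \log^{10}\mad(G) > k - \log^{10}k$ allows $\mad(G) - k$ to be vanishingly small. In that regime $(1-\varepsilon')(\mad(G)+1) + \varepsilon'\omega(G) \approx k + 1 - \varepsilon'\log^{10}k$, whose ceiling drops strictly below $k$ once $\varepsilon'\log^{10}k > 2$; no fixed $\varepsilon'$ survives as $k \to \infty$. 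Invoking Theorem~\ref{nice local critical thm} cannot rescue this either, because it imposes the identical requirement $\Gap(v) \geq \log^{10}k$ on every vertex of the critical subgraph, which is exactly what fails when some vertex sits in a clique of size larger than $k - \log^{10}k$.

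The paper's (omitted, but sketched in the source) proof inverts the roles to avoid this dichotomy: it fixes $k := \lceil(1-\varepsilon')(\mad(G)+1) + \varepsilon'\omega(G)\rceil$ as the target from the outset and argues by contradiction that $\chi_\ell(G) \leq k$. With $k$ defined this way, one has $k - \omega(G) \geq (1-\varepsilon')(\mad(G)+1-\omega(G)) \geq (1-\varepsilon')(1+\log^{10}\mad(G))$ while $k \leq \mad(G)+2$, so the hypothesis $\omega(G) \leq \mad(G) - \log^{10}\mad(G)$ translates (using the slack built into the $\log^{10}k$ threshold) into the condition $\omega(G) \leq k - \log^{10}k$ needed to apply Theorem~\ref{nice local critical thm} to any $L$-critical subgraph arising from a $(k-1)$-list-assignment; your case split disappears. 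The omitted proof also handles separately the regime where $\mad(G)$ is below the threshold required by the critical-graph theorems, by choosing $\varepsilon'$ small enough that $k \geq \lfloor\mad(G)\rfloor + 1$ and greedy coloring suffices. You should restructure your argument in this way rather than attempting to patch the complementary case.
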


Theorem~\ref{epsilon mad reeds} extends our previous result~\cite[Theorem 1.15]{KP18} to the more difficult range $\mad(G)/2 \leq \omega(G) \leq \mad(G) - \log^{10}\mad(G)$ in the same way that Theorem~\ref{main avg degree bound} extends~\cite[Theorem 1.13]{KP18}.  Theorem~\ref{epsilon mad reeds} also strengthens Reed's~\cite{R98} bound for any graph satisfying $\omega(G) \leq \mad(G) - \log^{10}\mad(G)$.

\subsection{A local version: $\Gap$ and $\Save$}\label{intro local section}

We prove Theorem~\ref{main avg degree bound} in the more general setting of \textit{list coloring}.  A \textit{list assignment} for a graph $G$ is a collection of ``lists of colors'' $L = (L(v) \subset \mathbb N: v\in V(G))$, an \textit{$L$-coloring} of $G$ is a proper coloring $\phi$ such that $\phi(v) \in L(v)$ for every $v\in V(G)$, and $G$ is \textit{$L$-critical} if $G$ is not $L$-colorable but every proper subgraph is.

We also strengthen Theorem~\ref{main avg degree bound} by proving that it holds for graphs with $\sum_{v\in V(G)}\omega(v)/|V(G)| \leq \omega$, where $\omega(v)$ is the size of the largest clique containing $v$.

We actually strengthen this result even further.  To that end, we use the following notation.  If $G$ is a graph with list assignment $L$, then for each $v\in V(G)$, we let
  \begin{itemize}
  \item $\Gap_G(v) = d(v) + 1 - \omega(v)$ and
  \item $\Save_L(v) = d(v) + 1 - |L(v)|$.
  \end{itemize}
  We often omit the graph $G$ from the subscript of $\Gap$ if there is no ambiguity.  

The following is the main result in this paper.

\begin{restatable}{thm}{localThm}
\label{nice local critical thm}
  There exists $\eps > 0$ such that the following holds.  If $k$ is sufficiently large and $G$ is an $L$-critical graph for some list assignment $L$ satisfying $|L(v)| = k - 1$ and $\Gap(v) \geq \log^{10}k$ for each $v\in V(G)$, then
  \begin{equation}\label{nice bound equation}
    \sum_{v\in V(G)}\Save_L(v) > \sum_{v\in V(G)}\varepsilon \Gap(v).
  \end{equation}
\end{restatable}

Theorem~\ref{nice local critical thm} implies Theorems~\ref{main avg degree bound} and~\ref{epsilon mad reeds} fairly easily.  We include the proofs in Section~\ref{proof section}.

Besides being independently interesting, the stronger formulation of Theorem~\ref{nice local critical thm} is crucial to the proof.  Using $\Gap$ and $\Save$ allows us to shift to a new perspective from which we are better equipped to approach the problem.  In order to gain some intuition, we introduce the notion of the ``savings'' of a vertex with respect to a partial coloring and an ordering $\ordering$ of $V(G)$ (defined formally in Definition~\ref{random variable definitions}).  The savings is defined in such a way that if every vertex has savings larger than $\Save_L(v)$, then the partial coloring can be extended to an $L$-coloring of $G$ by coloring the uncolored vertices greedily in the ordering specified by $\ordering$.  Roughly, the savings for a vertex $v$ with respect to a partial coloring and an ordering $\ordering$ of $V(G)$ counts the total of the following:
\begin{itemize}
\item the number of neighbors of $v$ assigned a color not in $L(v)$,
\item the multiplicity less one of each color assigned to more than one of $v$'s neighbors, and
\item the number of uncolored neighbors $u$ of $v$ such that $u\ordering v$.
\end{itemize}

One typically approaches problems involving sparse graphs such as these by arguing that critical graphs do not have many vertices of low degree.  This approach is natural because vertices of low degree have small $\Save$ and thus require less savings.  Instead of focussing on degrees, we take a different approach by focussing on the ratio of $\Gap(v)$ and $\Save(v)$.
Very roughly, this ratio is important because we can find a partial coloring of a critical graph, using the probabilistic method, wherein although $\omega(v)$ colors may be forced in $N(v)$, the savings for $v$ is $\Omega(\Gap(v))$, as long as $v$ is not one of a few exceptional types.  By combining the probabilistic method with a unique application of the discharging method to handle these exceptional vertices, we can prove Theorem~\ref{nice local critical thm}.  We give a more detailed overview of the proof in Section~\ref{overview section}.

\section{Overview of the proof}\label{overview section}

Our strategy can be understood as an application of the method of discharging and reducible configurations, which is perhaps most well-known for being used to prove the Four Color Theorem~\cite{AH76, RSST97}.  To apply this method, one exhibits a list of configurations that are reducible, meaning they do not appear in a critical graph.  For the discharging, one assigns each vertex a \textit{charge} (simply a real number) such that the sum of the charges is some fixed negative constant and then provides ``discharging rules'' to obtain a new assignment of charges such that the sum is preserved.  The discharging rules are designed so that any vertex with negative final charge is in a reducible configuration.  Consequently, the graph is not critical, which can be used to obtain a proof by contradiction.

Our application of this method deviates from the traditional usage in several ways.
\begin{itemize}
\item The charge that we assign to each vertex is not a function of its degree as is common -- our charge assignment incorporates $\Gap$ and $\Save$.
\item We may have to apply our discharging argument arbitrarily many times, irrespective of $k$, whereas it typically suffices to apply discharging rules only once.
\item Our reducible configurations can be arbitrarily large and consequently our list of reducible configurations is not finite.  Moreover, we prove reducibility of some configurations using the probabilistic method.
\end{itemize}

\subsection{Discharging}\label{iterative discharging subsection}

Suppose $G$ and $L$ satisfy the hypotheses of Theorem~\ref{nice local critical thm} yet~\eqref{nice bound equation} does not hold.  In light of the discussion in Section~\ref{intro local section}, it would be natural to assign initial charges to the vertices of $G$ by giving each vertex $v$ a charge of $\Save_L(v) - \varepsilon\Gap(v)$ -- the total sum of the charges is at most zero, and vertices with small charge are in some sense easier to color.
If we are unable to find sufficient savings for a vertex $v$, then we ``discharge'' it, meaning we put it in a set $D$ and redistribute its charge to neighbors not in $D$ (this process is formalized in Lemma~\ref{inductive discharging lemma}).  In particular, $v$ sends $\varepsilon$ charge to each neighbor not in $D$, and we only discharge $v$ if the remaining charge of vertices in $D$ is positive.  Importantly, a vertex $v\in D$ always has $O(\Save_L(v))$ neighbors not in $D$, because otherwise we could put $v$ before these neighbors in the ordering $\ordering$ and have sufficient savings.  Since $D$ has positive charge, it is a proper subset of $V(G)$, so we can find a coloring of $G[D]$ as $G$ is critical.  If $L'$ is the list assignment for $G - D$ consisting of the remaining available colors, then a vertex $v\notin D$ has charge at least $\Save_{L'}(v) - \varepsilon\Gap_{G - D}(v)$.  Thus, the charges track the ratios of $\Gap$ and $\Save$ -- vertices in $D$ ``pay'' their neighbors for the fact that they no longer contribute to their $\Gap$.
We may have to iterate this discharging procedure arbitrarily many times, but since the discharged set retains positive charge at each iteration, this procedure would terminate.  We remark that in our formal proof, there is no need to keep track of these iterations; instead, it suffices to consider such a set $D$ of maximum cardinality (see the proof of Theorem~\ref{local critical thm}).   At this point, we are able to extend a precoloring of the discharged vertices to the remaining vertices using the probabilistic method.  In other words, all of the remaining vertices comprise a reducible configuration.  

\subsection{A technical result}

For technical reasons, we introduce some additional terms into the actual assignment of initial charges, and we prove a stronger, more technical result.  In order for our application of the probabilistic method to succeed, we need each vertex $v$ to have savings at least $\Omega(\log^{10}k)$.  We can achieve this if $\Gap(v) = \Omega(\log^{10}k)$ as assumed in Theorem~\ref{nice local critical thm}.  Thus, we need to take extra care if $\Gap_{G - D}(v)= o(\log^{10}k)$.  We also need to take extra care if $v$ is a vertex with $|L'(v)| = o(k)$, where $L'$ is the list assignment for $G - D$ consisting of the remaining available colors.  If $uv\in E(\overline{G[N(w)]})$ such that $L(u)\cap L(v) = \varnothing$ and $L(u)\cup L(v)\subseteq L(w)$, then we cannot use $u$ and $v$ to contribute to the savings for $w$.  However, if $|L(u)|, |L(v)| > k/2$, then we cannot have this situation.

These issues only arise if the vertex $v$ has many neighbors in $D$.  If we sacrifice a constant factor in the value of $\varepsilon$, we have a clever way to handle these issues.  When vertices are discharged, we instead have them send $9\varepsilon$ charge to each neighbor $v\notin D$: an extra $\varepsilon$ charge ``pays'' for the potential deficit in $\Gap(v) - \log^{10}k$ and $7\varepsilon$ charge ``pays'' for $k - |L(v)|$.  If a vertex is problematic, then it has stored ``extra charge'', and we can discharge it in the subsequent iteration.

The following is the main technical result in this paper.

\begin{restatable}{thm}{techThm}
\label{local critical thm}
  There exists $\eps > 0$ such that the following holds.  If $k$ is sufficiently large and $G$ is an $L$-critical graph for some list assignment $L$ such that for each vertex $v\in V(G)$ we have $|L(v)| \leq k$, then
  \begin{equation}\label{main thm equation}
    \sum_{v\in V(G)}(\Save_L(v) + \varepsilon\,\log^{10}k) > \sum_{v\in V(G)} (2\varepsilon\,\Gap(v) - 7\varepsilon(k - |L(v)|)).
  \end{equation}
\end{restatable}

Theorem~\ref{local critical thm} is easily seen to imply Theorem~\ref{nice local critical thm} -- we include a proof in Section~\ref{proof section}.  The remainder of the paper is devoted to the proof of Theorem~\ref{local critical thm}.  To that end, we introduce constants $\delta, \mu, \eta \in (0, 1)$ ($\delta$ is first used in Definition~\ref{nbrhood partition definition}, $\mu$ is first used in Definition~\ref{ways to save definition}, and $\eta$ is first used in Definition~\ref{heavy definition}), which we do not determine precisely, but we assume satisfy the following hierarchy for the remainder of the paper:
\begin{equation}\label{hierarchy}
  0 < 1/k_0 < \eps \ll \eta \ll \mu \ll \delta \ll 1.
\end{equation}
We assume $k > k_0$, and for each vertex $v\in V(G)$, we assign an initial charge of
\begin{equation*}
  \ch(v) = \Save_L(v) + \varepsilon\log^{10}k - 2\varepsilon\Gap(v)  + 7\varepsilon(k - |L(v)|).
\end{equation*}

\subsection{Reducible configurations}

In the proof of Theorem~\ref{local critical thm}, there are two types of reducible configurations, which are called \textit{saved} and \textit{dense}.  

\subsubsection*{Saved graphs.}

In Section~\ref{savings section} we define \textit{saved graphs} (with respect to $L$) formally, and we prove their reducibility using the probabilistic method.  To prove reducibility we build upon ideas from~\cite{KP18}, and thankfully we can avoid some technicalities involving concentration of random variables by invoking~\cite[Theorem~3.11]{KP18}.  
In order to show that saved graphs are reducible, we find a partial coloring in which each vertex $v$ has savings at least $\Save(v)$ using the probabilistic method.  We sample a random partial coloring and show that the savings for each vertex is large in expectation in any of the following situations (defined formally in Definition~\ref{ways to save definition} -- ``sufficiently many'' roughly means at least $\Save_L(v) / \mu$):
\begin{itemize}
\item sufficiently many neighbors of $v$ have many colors in their list that are not in $L(v)$, in which case we say $v$ is \textit{aberrant} (or \textit{slightly aberrant}),
\item sufficiently many pairs of non-adjacent neighbors of $v$ have lists of colors of size close to $|L(v)|$ (roughly $(1 \pm \delta)|L(v)|$, and in particular greater than $|L(v)|/2$), in which case we say $v$ is \textit{egalitarian-sparse}, 
\item the neighborhood contains the complement of a large bipartite subgraph with partition $(A, B)$ called a \textit{half-egalitarian bipartition} (see Definition~\ref{half-egalitarian bipartition definition}) with sufficiently many vertices in $A$, where vertices in $B$ have lists of colors of size close to $|L(v)|$ and vertices in $A$ have many non-neighbors in $B$ (at least $\delta d(v) / 20$), in which case we say $v$ is \textit{bipartite-sparse}, or
\item sufficiently many neighbors $u$ of $v$ satisfy $v\ordering u$ for some fixed ordering $\ordering$ of $V(G)$, in which case we say $v$ is \textit{$\ordering$-prioritized}.
\end{itemize}

If every vertex is either aberrant, slightly aberrant, egalitarian-sparse, bipartite-sparse, or $\ordering$-prioritized, then we say the graph is ``saved'' (see Definition~\ref{saved definition}).  An induced subgraph is saved if after precoloring the complement, it is saved with respect to the list assignment consisting of the remaining available colors.  Section~\ref{savings section} is devoted to proving the reducibility of saved subgraphs.

\subsubsection*{Dense graphs.}%
In Section~\ref{discharging structure section} we show (see Lemma~\ref{normal vertex structure lemma}) that if a vertex $v$ is not one of the following types (defined formally in Definition~\ref{heavy definition}), then it contains a reducible configuration in its neighborhood which we call a \textit{dense graph}.  That is, either 
\begin{itemize}
\item $v$ is aberrant, slightly aberrant, egalitarian-sparse, or bipartite sparse,
\item many neighbors of $v$ have much fewer than $|L(v)|$ available colors (at most $(1 - \delta)|L(v)|$) and moreover $\Gap(v)$ is large (at least $3\delta d(v) / 4$), in which case we say $v$ is \textit{very lordly},
\item $v$ has high charge (at least $\eta \Gap(v)$), in which case we say $v$ is \textit{heavy}, 
\item $v$ has many heavy neighbors (at least $d(v) / 2$) and moreover $\Gap(v)$ is small (at most $\delta d(v) / 4$), in which case we say $v$ is \textit{sponsored}, or
\item there is an induced subgraph $H\subseteq G[N(v)]$ and a matching $M$ in $\overline H$ such that
\begin{equation*}
  |E(\overline H)| < |M|(|V(H)| - |M|) - \sum_{u\in V(H)}\Save_L(u).
\end{equation*}
\end{itemize}
In the latter case, we say that $H$ is \textit{dense with respect to $L$}.  The reducibility of dense graphs follows from~\cite[Theorem 4.1]{KP18}.


Note that for any graph $H$, a maximum matching in $\overline H$ has size at least $(|V(H)| - \omega(H))/2$, and thus $\overline{G[N(v)]}$ has a matching of size at least $\Gap(v)/2$.  If we let $H$ be the subgraph induced by neighbors of $v$ with roughly $L(v)$ available colors, then if $H$ is not dense with respect to $L$, either on the order of $\Gap(v)d(v)$ pairs of $v$'s neighbors in $H$ are non-adjacent, or $|N(v)\setminus V(H)| \geq \Gap(v)/2$,  or $v$ has many neighbors $u$ for which $\Save_L(u)$ is comparatively large.  If $v$ is not heavy, then in the former case, we show $v$ is egalitarian-sparse, in the second case we show $v$ is aberrant, bipartite-sparse or very lordly, and in the latter case we show that $v$ is either aberrant, slightly aberrant, or sponsored.

We can now describe how we choose the ``discharged'' set $D$ mentioned in Section~\ref{iterative discharging subsection}.  We let $S_0$ be the vertices of $G$ that are aberrant, slightly aberrant, egalitarian-sparse, or bipartite-sparse.  For $i\geq 1$, we let $S_i$ be the vertices not in $\cup_{j=0}^{i-1}S_j$ with enough neighbors in $\cup_{j=0}^{i-1}S_j$ so that for any ordering $\ordering$ of $V(G)$ satisfying $u\ordering v$ where $u\in S_i$ and $v\in S_j$ for $i > j$, every vertex in $\cup_{i\geq 1}S_i$ is $\ordering$-prioritized.  If $\cup_iS_i = V(G)$, then by construction, the graph $G$ is saved.  If not, we let $\mathcal L$ be the very lordly vertices not in $\cup_i S_i$, and we let $D = V(G)\setminus (\mathcal L \cup \bigcup_i S_i)$.  If $D = \varnothing$, then $\mathcal L = \varnothing$, since otherwise the vertex in $\mathcal L$ with the fewest available colors has enough neighbors in $\cup_i S_i$ to be $\ordering$-prioritized.  Lemma~\ref{normal vertex structure lemma} implies that the vertices in $D$ are either heavy or sponsored, which in turn implies that the total charge of vertices in $D$ is large.  Since vertices in $D$ do not have many neighbors in $\cup S_i$, they can afford to send $9\varepsilon$ charge to each neighbor there.  Lemma~\ref{in big clique with many very lordly nbrs is aberrant} implies that heavy and sponsored vertices with many very lordly neighbors are aberrant.  Thus, the vertices in $D$ do not have many very lordly neighbors and can afford to send them each $9\varepsilon$ charge as well.  

\section{Coloring a saved graph with the local naive random coloring procedure}\label{savings section}

The main result of this section is Theorem~\ref{saved is colorable}, which implies that saved subgraphs are reducible.  First, we need several definitions.

\subsection{The local naive random coloring procedure}

In the proof of Theorem~\ref{saved is colorable}, we analyze a random coloring procedure introduced in~\cite[Section 3]{KP18}  called the ``local naive random coloring procedure.''  In this subsection, we recall the details of this procedure.  For additional background, we refer the reader to~\cite{KP-arxiv}, which corrects some errors appearing in~\cite{KP18} (see also~\cite{KP-corrigendum}).    

We let $G$ be a graph with list assignment $L$, and we let $(L, M)$ be a correspondence assignment (first defined in~\cite{DP15}, see also~\cite{KP18}) for $G$.  It is possible to prove our results without the notion of correspondence coloring, but correspondence coloring simplifies some calculations in Section~\ref{expectation section}.   A \textit{naive partial $(L, M)$-coloring} of $G$ is a pair $(\phi, U)$ where $\phi : V(G)\rightarrow\mathbb N$ such that $\phi(u)\in L(u)$ for every $u\in V(G)$ and $U\subseteq V(G)$ is a set of \textit{uncolored vertices} such that $\phi|_{V(G)\setminus U}$ is an $(L, M)$-coloring of $G - U$.  A vertex is called \textit{colored} if it is not in $U$.  We sample a naive partial $(L, M)$-coloring randomly using the following procedure.

\begin{define}\label{random coloring procedure}
  Let $\varepsilon \in (0, 1)$ and $\rho \in [0, 1]$, and let $K_{\varepsilon, \rho} = 999\rho e^{-\rho/(1 - \varepsilon)} / 1000$.
  If for each $v\in V(G)$, $|L(v)| \geq (1 - \varepsilon)d(v)$ and $G$ has minimum degree at least $1000 / (1 - \eps)^2$, then the \textit{local naive random coloring procedure with activation probability $\rho$ and $\varepsilon$-equalizing coin-flips} samples a random naive partial $(L, M)$-coloring $(\phi, \uncolvtcs)$ (and a set of activated vertices $A$) in the following way.  For each $v \in V(G)$,
  \begin{enumerate}
  \item let $v\in A$ independently at random with probability $\rho$,
  \item choose $\phi(v)\in L(v)$ independently and uniformly at random, 
  \item\label{uncoloring-rule} let $v\in U'$ if there exists $u\in N(v)\cap A$ such that $|L(u)| \geq |L(v)|$ and $\phi(u)\phi(v) \in M_{uv}$,
  \item\label{equalizing-coin-flip-step} conduct an ``$\eps$-equalizing coin flip'' for $v$ and each $c \in L(v)$ that is ``heads'' with probability $1 - \rho^{-1}K_{\varepsilon, \rho}/\ProbCond{v\notin \uncolvtcs'}{\phi(v) = c}$, and
  \item let $v \in U$ if one of the following holds: $v \notin A$, $v \in U'$, or the coin flip for $v$ and $\phi(v)$ is heads.
  \end{enumerate}
\end{define}
We require that every $v \in V(G)$ satisfies $|L(v)| \geq (1 - \eps)d(v)$ and that $G$ has minimum degree at least $1000 / (1 - \eps)^2$ so that Step~\ref{equalizing-coin-flip-step} of Definition~\ref{random coloring procedure} is well-defined.  Indeed,~\cite[Proposition~3.7]{KP-corrigendum} implies that $1 - \rho^{-1}K_{\varepsilon, \rho} / \ProbCond{v \notin U'}{\phi(v) = c} \in [0, 1]$ for every $v\in V(G)$ and $c\in L(v)$, and as proved in~\cite[Proposition~3.9]{KP-corrigendum}, this step ensures that $\ProbCond{v \notin U}{\phi(v) = c} = K_{\varepsilon, \rho}$.

\begin{define}
  Let $\sigma\in [0, 1)$.  For each $v\in V(G)$ and $u\in N(v)$, we say $u$ is a \textit{$\sigma$-egalitarian neighbor} of $v$ if $u$ has at least $(1 - \sigma)|L(v)|$ available colors.  We let $\egal_\sigma(v)$ denote the set of $\sigma$-egalitarian neighbors of $v$.
\end{define}

\begin{define}\label{random variable definitions}
  For each $v\in V(G)$, we define the following random variables, with respect to $\sigma \in [0, 1)$ and an ordering $\ordering$ of $V(G)$, where $(\phi, U)$ is a random naive partial $(L, M)$-coloring of $G$ and $A \supseteq V(G)\setminus U$ is the set of activated vertices.
  \begin{itemize}
  \item Let $\aberrance_{v,\sigma}(\phi, U)$ count the number of colored $\sigma$-egalitarian neighbors $u$ of $v$ such that $\phi(u)$ is not matched by $M_{uv}$.
  \item For each $X\subseteq N(v)$, let $\pairs_{v, X}(\phi, U)$ and $\trips_{v, X}(\phi, U)$ count the number of pairs and triples respectively of colored neighbors of $v$ in $X$ that receive colors that are matched to the same color in $L(v)$.
  \item Let $\unact_{v, \ordering}(A)$ count the number of non-activated neighbors $u$ of $v$ such that $u\ordering v$.
  \end{itemize}      
  More precisely, we have that
  \begin{align*}
    \aberrance_{v,\sigma}(\phi, U) &= |\{u\in \egal_\sigma(v) \setminus U  : \phi(u)\notin V(M_{uv})\}|,\\
    \pairs_{v, X}(\phi, U)  &= |\{x,y\in X\setminus U, c\in L(v) : \phi(x)c\in M_{xv}\text{ and }\phi(y)c\in M_{yv}\}|,\\
    \trips_{v, X}(\phi, U) &= |\{x,y,z\in X\setminus U, c\in L(v) :  \phi(w)c\in M_{wv}~\forall~w\in\{x, y, z\}\}|, \text{ and}\\
    \unact_{v, \ordering}(A) &= |\{u\in N(v)\setminus A : u \ordering v\}|.
  \end{align*}
  We also define the following:
  \begin{itemize}
  \item $\savings'_{v, \sigma, \ordering}(\phi, U, A) = \aberrance_{v, \sigma}(\phi, U)  + \pairs_{v, \egal_\sigma(v)}(\phi, U) -\allowbreak \trips_{v, \egal_\sigma(v)}(\phi, U) +\allowbreak \unact_{v, \ordering}(A)$, and
  \item $\savings_{v, \sigma, \ordering}(\phi, U, A) = \aberrance_{v, \sigma}(\phi, U)  + \max\left\{\pairs_{v, X}(\phi, U) - \trips_{v, X}(\phi, U) :\allowbreak X \subseteq \egal_\sigma(v)\right\} \allowbreak+ \unact_{v, \ordering}(A)$.
  \end{itemize}
\end{define}

As alluded to in Section~\ref{intro local section}, if $\sigma \in [0, 1]$, $\ordering$ is an ordering of $V(G)$, and $(\phi, U)$ is a naive partial $(L, M)$-coloring of $G$ satisfying $\savings_{v, \sigma, \ordering}(\phi, U, V(G)\setminus U) > \Save_L(v)$ for every $v \in V(G)$, then $\phi$ can be extended to an $(L, M)$-coloring of $G$ by coloring $V(G)\setminus U$ greedily in the ordering specified by $\ordering$.
In~\cite[Theorem 3.11]{KP18}, we proved roughly that under some technical conditions, in order to find such a naive partial $(L, M)$-coloring, it suffices to show that under the local naive random coloring procedure, the expected value of $\savings'_{v, \sigma, \ordering}$ is larger than $\Save_L(v)$ for each vertex $v$.  In this paper, we need the following slight strengthening of this result in which $\savings'$ is replaced with $\savings$ (note that $\savings'_{v, \sigma, \ordering}(\phi, U, A) \geq \savings_{v, \sigma, \ordering}(\phi, U, A)$).

\begin{thm}
  \label{local sparsity lemma}
  For every $\xi_1, \xi_2 > 0$, $\varepsilon,\sigma\in[0, 1)$, and $\rho \in [0, 1]$, there exists $\Delta_0$ such that the following holds.  Let $\Delta \geq \Delta_0$, and let $G$ be a graph with correspondence assignment $(L, M)$ such that
  \begin{enumerate}
  \item $G$ has maximum degree at most $\Delta$ and minimum degree at least $1000/(1 - \varepsilon)^2$, and
  \item $\Delta \geq |L(v)| \geq (1 - \varepsilon)d(v)$ for each $v \in V(G)$.
  \end{enumerate}
  If $\ordering$ is an ordering of $V(G)$ such that $\Expect{\savings_{v, \sigma, \ordering}} \geq \max\{(1 + \xi_1)\Save_L(v), \xi_2\log^{\logexp}\Delta\}$ for all $v \in V(G)$ for the local naive random coloring procedure with activation probability $\rho$ and $\eps$-equalizing coin-flips,
  then $G$ is $(L, M)$-colorable.
\end{thm}

The proof of Theorem~\ref{local sparsity lemma} can be obtained by a straightforward modification of the proof of~\cite[Theorem 3.11]{KP18}, so we omit the details.
The majority of the remainder of this section is devoted to proving that in a ``saved graph'' (see Definition~\ref{saved definition}), the expected value of  $\savings_{v, \sigma, \ordering}$ is large for each vertex $v$.  

\subsection{Ways to save}

We need to partition the neighborhood of each vertex according to the size of each neighbor's list of colors, as follows.  Recall $\delta \in (0, 1)$ satisfies~\eqref{hierarchy}.
\begin{define}\label{nbrhood partition definition}
  Let $G$ be a graph with list assignment $L$, let $v\in V(G)$, and let $u\in N(v)$.
  \begin{itemize}
  \item If $|L(u)| < (1 - \delta)|L(v)|$, then we say $u$ is a \textit{subservient neighbor} of $v$.
  \item If $|L(u)| \in [(1 - \delta)|L(v)|, (1 + \delta / 3)|L(v)|)$, then we say $u$ is an \textit{egalitarian neighbor} of $v$.
  \item If $|L(u)| \geq (1 + \delta / 3)|L(v)|$, then we say $u$ is a \textit{lordlier neighbor} of $v$.
  \item If $|L(u)| \geq |L(v)| + \delta\Gap(v) / 3$, then we say $u$ is a \textit{slightly lordlier neighbor} of $v$.
  \end{itemize}
  For convenience, we will let $\slaberrant(v)$ denote the set of slightly lordlier neighbors of $v$, $\aberrant(v)$ denote the set of lordlier neighbors of $v$, $\egal(v)$ denote the set of egalitarian neighbors of $v$, and $\subserv(v)$ denote the set of subservient neighbors of $v$.  
\end{define}

Recall that $\egal_\sigma(v)$ is the set of $\sigma$-egalitarian neighbors of $v$, which are neighbors $u$ of $v$ with at least $(1 - \sigma)|L(v)|$ available colors, and note that $\egal(v) \subseteq \egal_{2/3}(v)$.  We apply Theorem~\ref{local sparsity lemma} with $2/3$ as $\sigma$, $99/100$ as $\rho$, and $11\eps$ as $\eps$, so in this subsection we let $K = K_{11\eps, 99/100} = 999\cdot(99/100) e^{-(99/100) / (1 - 11\eps)} / 1000$.  Since $\eps \ll 1$, we have $K \geq .367$.  Recall also that $\mu \in (0, 1)$ satisfies~\eqref{hierarchy}.
The following definitions provide sufficient conditions for $\savings_{v, 2/3, \ordering}$ to be sufficiently large in expectation, for a random naive partial coloring sampled with the local naive random coloring procedure with activation probability $99/100$ and $11\eps$-equalizing coin flips.  

\begin{define}\label{half-egalitarian bipartition definition}
  A pair $(A, B)$ of disjoint subsets of $N(v)$ is a \textit{half-egalitarian bipartition} for $v$ if
  \begin{itemize}
  \item $B\subseteq \egal(v)$,
  \item $A\subseteq \egal_{2/3}(v)\cap\subserv(v)$, and
  \item each vertex $u\in A$ has at least $\delta d(v)/ 20$ non-neighbors in $B$.
  \end{itemize}
\end{define}

\begin{define}\label{ways to save definition}
  Let $G$ be a graph with list assignment $L$.
  We say a vertex $v\in V(G)$ is
  \begin{itemize}
  \item \textit{aberrant} with respect to $L$ and $k$ if
    \begin{equation*}
      |\aberrant(v)| \geq \left(\Save_L(v) + \varepsilon\log^{10}k\right)/\mu,
    \end{equation*}
  \item \textit{slightly aberrant} with respect to $L$ and $k$ if
     \begin{equation*}
      |\slaberrant(v)| \geq \frac{d(v)}{\Gap(v)}\left(\Save_L(v) + \varepsilon\log^{10}k\right)/\mu,
    \end{equation*}
  \item \textit{egalitarian-sparse} with respect to $L$ and $k$ if
    \begin{equation*}
      |E(\overline{G[\egal(v)]})| \geq d(v)\left(\Save_L(v) + \varepsilon\log^{10}k\right) / \mu,
    \end{equation*}
  \item \textit{bipartite-sparse} with respect to $L$ and $k$ if $v$ has a half-egalitarian bipartition $(A, B)$ such that
    \begin{equation*}
      |A| \geq 100\left(\Save_L(v) + \varepsilon\log^{10}k\right) / \mu,
    \end{equation*}
  \item and \textit{$\ordering$-prioritized} with respect to $L$ and $k$ if $\ordering$ is an ordering of $V(G)$ such that $v$ has at least $(\Save_L(v) + \varepsilon\log^{10}k)/\mu$ neighbors $u$ such that $v\ordering u$.
  \end{itemize}
\end{define}

As Lemma~\ref{aberrance savings} shows, an aberrant or slightly aberrant vertex $v$ has large expected savings because $\aberrance_v$ is large in expectation.  Each lordlier or slightly lordlier neighbor of $v$ has a good chance to receive a color not matched to a color in $L(v)$.

As Lemma~\ref{egal-sparse savings} shows, an egalitarian-sparse vertex $v$ has large expected savings because $\pairs_{v, \egal(v)} - \trips_{v, \egal(v)}$ is large in expectation.  Every pair of non-adjacent egalitarian neighbors of $v$ has a good chance to receive the same color.  Here it is important to consider correspondence coloring, rather than list coloring.  Correspondence coloring allows us to essentially assume that two neighboring vertices' lists of colors have as many colors in common as possible.  Since $\delta < 1/2$, two egalitarian neighbors of $v$ are forced to have some colors that correspond to the same color in $L(v)$.

Lemma~\ref{bipartite-sparse savings} shows that a bipartite-sparse vertex $v$ also has large expected savings because if $(A, B)$ is a half-egalitarian bipartition for $v$, then $\pairs_{v, A \cup B} - \trips_{v, A\cup B}$ is large in expectation.  Indeed, each vertex in $A$ has a good chance to receive the same color as one of its non-neighbors in $B$.  Here we also use correspondence coloring to force a vertex in $A$ and a vertex in $B$ to have some colors that correspond to the same color in $L(v)$, since $2 - \delta - (2/3) > 1$.

\subsection{Expectations}
\label{expectation section}
In this subsection, we let $G$ be a graph with list assignment $L$, and we let $(L, M)$ be a correspondence assignment for $G$.  We assume $(L, M)$ is \textit{total}, meaning for each $uv\in E(G)$, the matching $M_{uv}$ saturates at least one of $\{u\}\times L(u)$ or $\{v\}\times L(v)$.

We prove a series of lemmas that show that $\savings_{v, 2/3, \ordering}$ is sufficiently large if a vertex $v$ satisfies one of the properties defined in Definition~\ref{ways to save definition}.  To that end, we let $(\phi, U)$ be randomly sampled according to the local naive random coloring procedure with activation probability $99/100$ and $11\varepsilon$-equalizing coin-flips, and we let $A$ denote the set of activated vertices and $U' \subseteq U$ as in Definition~\ref{random coloring procedure}.

The first such lemma will be applied to vertices that are either aberrant or slightly aberrant.

\begin{lemma}\label{aberrance savings}
  Let $0 < \mu \ll \delta \ll 1$.  For each $v\in V(G)$ such that $|L(v)| \leq d(v)$,
  \begin{equation*}
    \Expect{\aberrance_v} \geq 2\mu\cdot\max\left\{|\aberrant(v)|, \frac{\Gap(v)}{d(v)}|\slaberrant(v)|\right\}.
  \end{equation*}
\end{lemma}

\begin{proof}
  Let
  \begin{equation*}
    \totesabs_v(\phi) = |\{u \in N(v) : \phi(u)\notin V(M_{uv})\}|,
  \end{equation*}
  and note that $\Expect{\aberrance_v} = K\cdot\Expect{\totesabs_v}$.
  For each $u\in \aberrant(v)$,
  \begin{equation*}
    \Prob{\phi(u)\notin V(M_{uv})} \geq \frac{\delta/3}{1 + \delta/3},
  \end{equation*}
  and for each $u\in \slaberrant(v)$, since $d(v) \geq |L(v)|$ and $d(v) \geq \Gap(v)$,
  \begin{equation*}
    \Prob{\phi(u)\notin V(M_{uv})} \geq \frac{\delta\Gap(v)/3}{|L(v)| + \delta\Gap(v)/3} \geq \frac{\delta}{3 + \delta}\frac{\Gap(v)}{d(v)}.
  \end{equation*}
  Therefore it follows that
  \begin{equation*}
    \Expect{\totesabs_v} \geq \frac{\delta}{3 + \delta}\max\left\{|\aberrant(v)|, \frac{\Gap(v)}{d(v)}|\slaberrant(v)|\right\}.
  \end{equation*}
  Since $\Expect{\aberrance_v} = K\cdot\Expect{\totesabs_v}$ and $\mu \ll \delta$, the result follows.  
\end{proof}

The next two lemmas will be applied to the vertices that are egalitarian-sparse and bipartite-sparse, respectively.  First, we prove the following proposition which will be used in the proof of both of these lemmas.

\begin{prop}\label{prop:keep-squared}
  Let $0 < 1 / d_0 \ll \eps \ll 1$.  If every $v \in V(G)$ satisfies $d(v) \geq |L(v)| \geq (1 - 11\eps)d(v) \geq d_0$, then for every pair of non-adjacent vertices $x, y \in V(G)$ and for every pair of colors $c_x \in L(x)$ and $c_y \in L(y)$, we have
  \begin{equation*}
    \ProbCond{x, y \notin U}{\phi(x) = c_x,~\phi(y) = c_y} \geq \frac{99}{100} K^2.
  \end{equation*}  
\end{prop}
\begin{proof}
  Recall that $\rho = 99/100$ and that $U' \subseteq U$ is the set from Step~\ref{uncoloring-rule} of the local naive random coloring procedure with activation probability $\rho$ and $11\eps$-equalizing coin flips.  We assume without loss of generality that $d(y) \geq d(x)$.  Let $S_x = \{u \in \egal_0(x) : c_xc \in M_{xu}~\text{for some}~c \in L(u)\}$, and define $S_y$ analogously.  Let $B_1 = \{u \in S_x \cap S_y: c_xc \in M_{xu}~\text{and}~c_yc \in M_{yu}~\text{for some}~c \in L(u)\}$, and let $B_2 = (S_x \cap S_y)\setminus B_1$.  By the definition of our random coloring procedure, we have
  \begin{equation*}
    \ProbCond{x, y \notin U}{\phi(x) = c_x,~\phi(y) = c_y} = K^2\frac{\ProbCond{x, y \notin U'}{\phi(x) = c_x,~\phi(y) = c_y}}{\ProbCond{x \notin U'}{\phi(x) = c_x}\cdot \ProbCond{y \notin U'}{\phi(y) = c_y}}.
  \end{equation*}
  Moreover,
  \begin{equation*}
    \ProbCond{x, y \notin U'}{\phi(x) = c_x,~\phi(y) = c_y} =
    \prod_{u \in (S_x \cup S_y)\setminus B_2}\left(1 - \frac{\rho}{|L(u)|}\right)\prod_{u \in B_2}\left(1 - \frac{2\rho}{|L(u)|}\right), 
  \end{equation*}
  and
  \begin{align*}
    \ProbCond{x \notin U'}{\phi(x) = c_x} &= \prod_{u \in S_x}\left(1 - \frac{\rho}{|L(u)|}\right) && \mathrm{and}\\
    \ProbCond{y \notin U'}{\phi(y) = c_y} &= \prod_{u \in S_y}\left(1 - \frac{\rho}{|L(u)|}\right).
  \end{align*}
  Combining the above, we obtain
  \begin{align*}
    \ProbCond{x, y \notin U}{\phi(x) = c_x,~\phi(y) = c_y} &= K^2 \left.\prod_{u \in B_2}\frac{1 - 2\rho / |L(u)|}{(1 - \rho / |L(u)|)^2}\middle / \prod_{u \in B_1}\left(1 - \frac{\rho}{|L(u)|}\right)\right.\\
    &\geq K^2 \prod_{u \in B_2}\frac{1 - 2\rho / |L(u)|}{(1 - \rho / |L(u)|)^2}.
  \end{align*}
  For every $u \in B_2$, since $|L(u)| \geq d_0$, we have $\frac{1 - 2\rho / |L(u)|}{(1 - \rho / |L(u)|)^2} \geq 1 - \frac{2\rho^2}{|L(u)|^2} \geq 1 - \frac{3\rho^2}{d(x)^2}$.  Thus, since $d(x) \geq d_0$, we have
  \begin{equation*}
    \prod_{u \in B_2}\frac{1 - 2\rho / |L(u)|}{(1 - \rho / |L(u)|)^2} \geq \left(1 - \frac{3\rho^2}{d(x)^2}\right)^{d(x)} \geq \frac{99}{100}.
  \end{equation*}
  The result follows by combining the above two inequalities.
\end{proof}

\begin{lemma}\label{egal-sparse savings}
  Let $0 < 1 / d_0 \ll \eps \ll \mu \ll \delta \ll 1$.  
  If every $v \in V(G)$ satisfies $d(v) \geq |L(v)| \geq (1 - 11\varepsilon)d(v) \geq d_0$, then for every $v\in V(G)$, we have
  \begin{equation*}
    \Expect{\pairs_{v, \egal(v)} - \trips_{v, \egal(v)}} \geq 2\mu\frac{|E(\overline {G[\egal(v)]})|}{d(v)}.
  \end{equation*}
\end{lemma}

\begin{proof}
  We let $T(H)$ denote the set of triangles in a graph $H$.
  We define the following random variables for each $c\in L(v)$:
  \begin{align*}
    &\totespairs_{v,c}(\phi) = |\{x, y \in \egal(v): \phi(x)c\in M_{xv}\text{ and }\phi(y)c\in M_{yv}\}|, \text{\ and}\\
    &\totestrips_{v,c}(\phi) = |\{x, y, z\in \egal(v): \phi(x)c\in M_{xv}, \phi(y)c\in M_{yv},\text{ and }\phi(z)c\in M_{zv}\}|,
  \end{align*}
  and we define $\totespairs_v(\phi) = \sum_{c\in L(v)}\totespairs_{v,c}(\phi)$ and $\totestrips_v(\phi) = \sum_{c\in L(v)}\totestrips_{v,c}(\phi)$.

  For each $c\in L(v)$, let $H_c$ be the subgraph of $G[\egal(v)]$ defined as follows.  A vertex $x\in \egal(v)$ is in $V(H_c)$ if there exists a color $c_x\in L(x)$ such that $cc_x \in M_{vx}$, and $xy \in E(H_c)$ if $xy\in E(G)$ and moreover $c_xc_y \in M_{xy}$, where $cc_x \in M_{vx}$ and $cc_y \in M_{vy}$.  For every pair $xy \in E(\overline H_c)$, by Proposition~\ref{prop:keep-squared}, we have $\ProbCond{x, y \notin U}{\phi(x) = c_x,~\phi(y) = c_y} \geq 99 K^2 / 100$.  Moreover, for every triple $xyz\in T(\overline H_c)$, we have $\ProbCond{x, y, z\notin U}{\phi(w) = c_w~\forall~w\in\{x,y,z\}} \leq \ProbCond {x \notin U}{\phi(x) = c_w} = K$.
  Hence,
  \begin{equation}\label{pairs-trips-lower-bound-by-total}
    \Expect{\pairs_{v,\egal(v)} - \trips_{v,\egal(v)}} \geq \frac{99}{100}K^2\Expect{\totespairs_v} - K\Expect{\totestrips_v}.
  \end{equation}

  For every $c\in L(v)$, since $\delta\ll 1$,
  \begin{equation}    \label{total pairs expectation}
    \Expect{\totespairs_{v,c}}  = \sum_{xy\in E(\overline {H_c})}\frac{1}{|L(x)||L(y)|} \geq \frac{|E(\overline{H_c})|}{(1 + \delta/3)^2|L(v)|^2} \geq \frac{|E(\overline{H_c})|}{(1 + \delta)|L(v)|^2}.
  \end{equation}
  Similarly,
  \begin{equation}\label{trips expectation with triangles}
    \Expect{\totestrips_{v,c}} = \sum_{xyz\in T(\overline{H_c})}\frac{1}{|L(x)||L(y)||L(z)|} \leq \frac{|T(\overline{H_c})|}{(1 - \delta)^3|L(v)|^3}.
  \end{equation}
  Rivin~\cite{R02} proved that every graph $H$ satisfies
  \begin{equation}
    \label{triangle-bound}
    |T(H)| \leq \frac{(2|E(H)|)^{\frac{3}{2}}}{6}.
  \end{equation}
    By \eqref{trips expectation with triangles} and \eqref{triangle-bound},
  \begin{equation}
    \label{total trips expectation}
    \Expect{\totestrips_{v,c}} \leq \frac{\sqrt{2}|E(\overline{H_c})|^{3/2}}{3(1 - \delta)^3|L(v)|^3}.
  \end{equation}
  It follows from \eqref{pairs-trips-lower-bound-by-total}, \eqref{total pairs expectation}, and \eqref{total trips expectation} that
  \begin{equation*}
    \Expect{\pairs_{v, \egal(v)} - \trips_{v, \egal(v)}} \geq \frac{K}{|L(v)|^2}\sum_{c\in L(v)}|E(\overline{H_c})|\left(\frac{99K}{100(1 + \delta)} - \frac{\sqrt{2|E(\overline{H_c})|}}{3(1 - \delta)^3|L(v)|}\right),
  \end{equation*}
  and since $|E(\overline{H_c})| \leq \binom{d(v)}{2}$ for each $c \in L(v)$, $|L(v)| \geq (1 - 11\eps)d(v)$, and $\eps \ll \delta \ll 1$,
  \begin{equation}\label{pairs-triples-bound-per-color}
    \Expect{\pairs_{v, \egal(v)} - \trips_{v, \egal(v)}} \geq \frac{K}{|L(v)|^2}\sum_{c\in L(v)}{|E(\overline{H_c})|}/{100}.
  \end{equation}
  
  For every $x, y \in \egal(v)$, let $C_{x, y} = \{c\in L(v) : x, y \in V(H_c)\}$.  We claim that $|C_{x, y}| \geq (1 - 2\delta)|L(v)|$ for each $x, y\in \egal(v)$.  Suppose $|L(y)| < |L(v)|$, or else $|C_{x,  y}| \geq (1 - \delta)|L(v)| \geq (1 - 2\delta)|L(v)|$ since $(L, M)$ is total, as claimed.  Now $|L(x)| + |L(y)| - |C_{x, y}| \leq |L(v)|$.  Hence, $|C_{x, y}| \geq 2(1 - \delta)|L(v)| - |L(v)| = (1 - 2\delta)|L(v)|$, as claimed.  Now
  \begin{equation}\label{color sparsity double count}
    \sum_{c\in L(v)}|E(\overline{H_c})| \geq \sum_{xy \in E(\overline{G[\egal(v)]})}|C_{x, y}| \geq (1 - 2\delta)|L(v)|\cdot|E(\overline{G[\egal(v)]})|.
  \end{equation}
  The result follows by combining~\eqref{pairs-triples-bound-per-color} and~\eqref{color sparsity double count}, since $\mu \ll \delta \ll 1$.
\end{proof}

The next lemma will be applied to the vertices that are bipartite-sparse.
\begin{lemma}\label{bipartite-sparse savings}
  Let $0 < 1 / d_0 \ll \eps \ll \mu \ll \delta \ll 1$.  If every $v\in V(G)$ satisfies $d(v) \geq |L(v)| \geq (1 - 11\eps)d(v) \geq d_0$, and 
  if $v$ has a half-egalitarian bipartition $(A, B)$ with $|A| \leq d(v) / 100$, then
  \begin{equation*}
    \Expect{\pairs_{v, A\cup B} - \trips_{v, A\cup B}} \geq 2\mu |A|.
  \end{equation*}
\end{lemma}
\begin{proof}
  Let $X = A \cup B$.
  We define the following random variables for each $c\in L(v)$:
  \begin{align*}
    &\totespairs_{v,c}(\phi) = |\{x, y \in X: \phi(x)c\in M_{xv}\text{ and }\phi(y)c\in M_{yv}\}|, \text{\ and}\\
    &\totestrips_{v,c}(\phi) = |\{x, y, z\in X: \phi(x)c\in M_{xv}, \phi(y)c\in M_{yv},\text{ and }\phi(z)c\in M_{zv}\}|,
  \end{align*}
  and we define $\totespairs_v(\phi) = \sum_{c\in L(v)}\totespairs_{v,c}(\phi)$ and $\totestrips_v(\phi) = \sum_{c\in L(v)}\totestrips_{v,c}(\phi)$.

  For each $c\in L(v)$, let $H_c$ be the subgraph of $G[X]$ defined as follows.  A vertex $x\in X$ is in $V(H_c)$ if there exists a color $c_x\in L(x)$ such that $cc_x \in M_{vx}$, and $xy \in E(H_c)$ if $xy\in E(G)$ and moreover $c_xc_y \in M_{xy}$, where $cc_x \in M_{vx}$ and $cc_y \in M_{vy}$.  As in Lemma~\ref{egal-sparse savings}, we have  
  \begin{equation*}
    \Expect{\pairs_{v, X} - \trips_{v,X}} \geq \frac{99}{100}K^2\Expect{\totespairs_v} - K\Expect{\totestrips_v}.
  \end{equation*}
  For every $c\in L(v)$,
  
  \begin{minipage}{.4\textwidth}
    \centering
  \begin{equation*}
    \Expect{\totespairs_{v,c}} = \sum_{xy\in E(\overline {H_c})}\frac{1}{|L(x)||L(y)|} 
  \end{equation*}
  \end{minipage}%
  \begin{minipage}{.1\textwidth}
    \centering
    and
  \end{minipage}%
  \begin{minipage}{.4\textwidth}
    \centering
    \begin{equation*}
      \Expect{\totestrips_{v, c}} = \sum_{xyz\in T(\overline{H_c})}\frac{1}{|L(x)||L(y)||L(z)|}.
    \end{equation*}
  \end{minipage}
  
  Combining the above inequalities, we have
  \begin{equation*}
    \Expect{\pairs_{v, X} - \trips_{v,X}} \geq \sum_{c \in L(v)}\sum_{xy\in E(\overline {H_c})}\frac{K}{|L(x)||L(y)|}\left(\frac{99}{100}K - \sum_{\substack{z \in X :\\ xyz\in T(\overline{H_c})}}\frac{1}{3}\cdot\frac{1}{|L(z)|}\right).
  \end{equation*}
  Since $|A| \leq d(v) / 100$ and $|L(v)| \geq (1 - 11\eps)d(v)$, and since $\eps \ll \delta \ll 1$,
  \begin{equation*}
    \sum_{z \in X}\frac{1}{|L(z)|} \leq \frac{|A|}{|L(v)| / 3} + \frac{|B|}{(1 - \delta)|L(v)|} \leq 
    \frac{1}{1 - 11\eps}\left(\frac{3}{100} + \frac{1}{1 - \delta}\right) \leq \frac{104}{100}.
  \end{equation*}
  Combining the previous two inequalities, since $99K / 100 - 104/300 \geq 1 /90$, we have
  \begin{equation}\label{bipartite pairs equation}
    \Expect{\pairs_{v, X} - \trips_{v,X}} \geq \sum_{c \in L(v)}\sum_{xy\in E(\overline {H_c})}\frac{K}{90|L(x)||L(y)|} \geq \frac{K}{|L(v)|^2}\sum_{c \in L(v)}|E(\overline{H_c})| / 100.
  \end{equation}

  Let $H$ be a spanning bipartite subgraph of $\overline {G[A\cup B]}$ with bipartition $(A, B)$ such that each vertex $u\in A$ has at least $\delta d(v) / 20$ neighbors in $B$.  For each $x \in A$ and $y\in B$, let $C_{x, y} = \{c \in L(v) : x, y \in V(H_c)\}$.  We claim that $|C_{x, y}| \geq (1/3 - \delta)|L(v)|$ for each $x \in A$ and $y \in B$.  Suppose $|L(y)| < |L(v)|$, or else $|C_{x, y}| \geq (1 - 2/3)|L(v)| \geq (1/3 - \delta)|L(v)|$ since $(L, M)$ is total, as claimed.  Now $|L(x)| + |L(y)| - |C_{x, y}| \leq |L(v)|$.  Hence, $|C_{x, y}| \geq (1 - 2/3)|L(v)| + (1 - \delta)|L(v)| - |L(v)| = (1/3 - \delta)|L(v)|$, as claimed.
  Now
  \begin{equation*}
    \sum_{c \in L(v)}|E(\overline{H_c})| \geq \sum_{xy \in \overline{H}}|C_{x, y}| \geq (1/3 - \delta)|L(v)|\frac{\delta d(v)}{20}|A|.
  \end{equation*}
  The previous inequality, together with \eqref{bipartite pairs equation}, implies the lemma, since $\mu \ll \delta \ll 1$.
\end{proof}

We will apply the following lemma to $\ordering$-prioritized vertices.

\begin{lemma}\label{subservience savings}
  Let $0 < \mu \ll 1$.  If $\ordering$ is an ordering of $V(G)$ and $v\in V(G)$, then
  \begin{equation*}
    \Expect{\unact_{v, \ordering}} \geq 2\mu|\{u \in N(v) : u \ordering v\}|.
  \end{equation*}
\end{lemma}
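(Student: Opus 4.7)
The plan is to apply linearity of expectation directly to the definition of $\subservience_{v,\ordering}$. Recall from Definition~\ref{random variable definitions} that
\[
\subservience_{v,\ordering}((\phi,U)) = |\{u \in N(v) \cap U : u \ordering v\}|,
\]
so I can rewrite this as a sum of indicator random variables $\mathbf{1}[u \in U]$ over neighbors $u$ of $v$ with $u \ordering v$.

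Next, I would invoke the key property of the local naive random coloring procedure stated in the paragraph introducing it: for every vertex $u$, the event $u \in U$ occurs with probability exactly $1 - .999 e^{-1/(1-\varepsilon')} = 1 - K$. Applying linearity of expectation to the sum of indicators then yields
\[
\Expect{\subservience_{v,\ordering}} = \sum_{\substack{u \in N(v) \\ u \ordering v}} \Prob{u \in U} = (1-K)\,|\{u \in N(v) : u \ordering v\}|,
\]
which is the desired identity.

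There is no substantive obstacle here: this is a one-line consequence of linearity of expectation combined with the predetermined uncoloring probability of the procedure. Unlike Lemmas~\ref{aberrance savings}, \ref{egal-sparse savings}, and \ref{bipartite-sparse savings}, there are no correspondence-matching subtleties, no Rivin-style triangle bounds, and no joint-probability computations; each neighbor contributes independently to the expectation through its own uncoloring event, and no conditioning on $\phi$ is required since the indicator depends only on whether $u \in U$.
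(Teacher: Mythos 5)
Your proof is correct and matches the paper's proof exactly: both decompose $\subservience_{v,\ordering}$ into indicators of the events $u\in U$ over the relevant neighbors, use $\Prob{u\in U}=1-K$, and conclude by linearity of expectation.
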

\begin{proof}
  Since $\Prob{u\in A} = 99/100$ for each $u\in N(v)$, we have $\Expect{\unact_{v, \ordering}} = |\{u \in N(v) : u \ordering v\}| / 100$ by Linearity of Expectation.  Since $\mu \ll 1$, the result follows.
\end{proof}

\subsection{Applying Theorem~\ref{local sparsity lemma}}

We are finally ready to state the definition of a saved graph.
\begin{define}\label{saved definition}
  We say a graph $G$ with list assignment $L$ is \textit{saved with respect to $L$ and $k$} if
  \begin{itemize}
  \item $k \geq |L(v)|$ for each vertex $v$,
  \item $(1 - 11\varepsilon)d(v) \leq |L(v)| \leq d(v)$ for each vertex $v$, and
  \item there exists an ordering $\ordering$ of $V(G)$ such that every vertex $v$ is either aberrant, slighly aberrant, egalitarian-sparse, bipartite-sparse, or $\ordering$-prioritized with respect to $L$ and $k$.
  \end{itemize}
  We say a subgraph $H\subseteq G$ is \textit{saved with respect to $L$ and $k$} if for every $L$-coloring $\phi$ of $G - V(H)$, the graph $H$ is saved with respect to $L'$ and $k$ where $L'(v) = L(v)\setminus (\cup_{u\in N(v)\setminus V(H)}\phi(u))$.
\end{define}

The following is the main result of this section.  It implies that an $L$-critical graph does not contain a saved subgraph.

\begin{thm}\label{saved is colorable}
  Let $0 < 1/k_0 \ll \eps \ll \mu \ll \delta \ll 1$, and let $k > k_0$.  If $G$ is saved with respect to a list assignment $L$ and $k$, then $G$ is $L$-colorable.
\end{thm}
\begin{proof}
  We assume that $(L, M)$ is a total correspondence assignment for $G$ such that every $(L, M)$-coloring of $G$ is an $L$-coloring.
  We apply Theorem~\ref{local sparsity lemma} with $2k$, $2/3$, $11\varepsilon$, $99/100$, $1$, and $198\eps/100$ playing the roles of $\Delta$, $\sigma$, $\varepsilon$, $\rho$, $\xi_1$, and $\xi_2$, respectively.  Since $d(v) \leq |L(v)|/(1 - 11\varepsilon)$ and $|L(v)| \leq k$ for each vertex $v$, we have $d(v) \leq 2k = \Delta$, as required.  We assume $k$ is large enough so that $(99/100)\log^{10}2k \leq \log^{10}k$ and $2k \geq \Delta_0$ from Theorem~\ref{local sparsity lemma}.  Moreover, since every vertex is either aberrant, slightly aberrant, egalitarian-sparse, bipartite-sparse, or $\ordering$-prioritized with respect to $L$ and $k$, we may assume that the degree of every vertex is at least $1000/(1 - 11\varepsilon)^2$, as required in Theorem~\ref{local sparsity lemma}, and sufficiently large as in Proposition~\ref{prop:keep-squared} and Lemmas~\ref{egal-sparse savings} and~\ref{bipartite-sparse savings}.

  Let $v\in V(G)$.  If $v$ is aberrant or slightly aberrant with respect to $L$ and $k$, then by Lemma~\ref{aberrance savings},
  \begin{equation*}
    \Expect{\savings_{v, \sigma, \ordering}} \geq \Expect{\aberrance_v} \geq 2(\Save_L(v) + \varepsilon\log^{10}k) \geq (1 + \xi_1)\Save_L(v) + \xi_2\log^{10}2k.
  \end{equation*}
  If $v$ is egalitarian-sparse with respect to $L$ and $k$, then by Lemma~\ref{egal-sparse savings}, since $\egal(v) \subseteq \egal_{2/3}(v)$,
  \begin{multline*}
    \Expect{\savings_{v, \sigma, \ordering}} \geq \Expect{\pairs_{v, \egal(v)} - \trips_{v, \egal(v)}} \\\geq 2(\Save_L(v) + \varepsilon\log^{10}k) \geq (1 + \xi_1)\Save_L(v) + \xi_2\log^{10}2k.
  \end{multline*}
  If $v$ is bipartite-sparse with respect to $L$ and $k$, then $v$ has a half-egalitarian bipartition $(A, B)$ such that $|A| \geq 100(\Save_L(v) + \eps \log^{10}k) / \mu$.  Letting $A' \subseteq A$ have size $d(v) / 100 \geq |A| / 100$, we have a half-egalitarian bipartition $(A', B)$ such that $|A'| \geq (\Save_L(v) + \eps \log^{10}k) / \mu$.  Thus, by Lemma~\ref{bipartite-sparse savings}, since $A' \cup B \subseteq \egal_{2/3}(v)$,
  \begin{multline*}
    \Expect{\savings_{v, \sigma, \ordering}} \geq \Expect{\pairs_{v, A' \cup B} - \trips_{v, A' \cup B}} \\
    \geq 2(\Save_L(v) + \varepsilon\log^{10}k) \geq (1 + \xi_1)\Save_L(v) + \xi_2\log^{10}2k.
  \end{multline*}
  If $v$ is $\ordering$-prioritized, then by Lemma~\ref{subservience savings},
    \begin{equation*}
    \Expect{\savings_{v, \sigma, \ordering}} \geq \Expect{\unact_{v, \ordering}} \geq 2(\Save_L(v) + \varepsilon\log^{10}k) \geq (1 + \xi_1)\Save_L(v) + \xi_2\log^{10}2k.
  \end{equation*}
  Therefore $\Expect{\savings_{v, \sigma, \ordering}} \geq \max\{(1 + \xi_1)\Save_L(v), \xi_2\log^{10}2k\}$, as required.

  Now by Theorem~\ref{local sparsity lemma}, $G$ is $(L, M)$-colorable, as desired.
\end{proof}

\section{Finding a saved subgraph}\label{discharging section}

The main result of this section is the following lemma which we prove with discharging.  Recall that a subgraph $H\subseteq G$ is dense with respect to $L$ if there is a matching $M$ in $\overline H$ such that $|E(\overline H)| < |M|(|V(H)| - |M|) - \sum_{u\in V(H)}\Save_L(u)$.

\begin{lemma}\label{inductive discharging lemma}
  Let $G$ be a graph with list assignment $L$ such that \eqref{main thm equation} does not hold and for each vertex $v\in V(G)$, we have $|L(v)| \leq\min\{d(v), k\}$.  If $G$ has no dense subgraph with respect to $L$ and $k$ is sufficiently large, then either
  \begin{enumerate}[(a)]
  \item $G$ is saved with respect to $L$ and $k$, or
  \item\label{discharging outcome} there is a nonempty set $D\subsetneq V(G)$ such that
    \begin{equation*}
      \sum_{v\in V(G-D)}(\Save_L(v) + \varepsilon\log^{10}k) < \sum_{v\in V(G-D)}(2\varepsilon\Gap_{G-D} (v) - 7\varepsilon(k - |L(v)| + |N(v)\cap D|)).
    \end{equation*}
  \end{enumerate}
\end{lemma}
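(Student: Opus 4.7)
The plan is to carry out the iterative discharging scheme sketched in Section~\ref{overview section}. Give each $v\in V(G)$ the initial charge
$$ch(v) = \Save_L(v) + \varepsilon\log^{10}k - 2\varepsilon\Gap(v) + 7\varepsilon(k-|L(v)|),$$
so that the failure of \eqref{main thm equation} means $\sum_{v\in V(G)} ch(v) \leq 0$. Since $G$ contains no dense subgraph, Lemma~\ref{normal vertex structure lemma} classifies every vertex as aberrant, slightly aberrant, egalitarian-sparse, bipartite-sparse, very lordly, heavy, or sponsored. Let $S_0$ collect the first four ``saved'' types. Inductively, let $S_i$ consist of those $v\notin\bigcup_{j<i}S_j$ with enough neighbors in $\bigcup_{j<i}S_j$ to meet the $\ordering$-prioritized threshold for any ordering $\ordering$ placing $\bigcup_{j<i}S_j$ above $v$, and set $S=\bigcup_i S_i$. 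If $S=V(G)$, then fixing an ordering $\ordering$ that puts $S_i$ above $S_j$ whenever $i>j$ certifies $G$ as saved, yielding outcome~(a).

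Otherwise, let $\mathcal{L}$ be the set of very lordly vertices outside $S$ and set $D=V(G)\setminus(S\cup\mathcal{L})$. I first check $D\neq\varnothing$: if $D=\varnothing$ but $\mathcal{L}\neq\varnothing$, pick $v^*\in\mathcal{L}$ minimizing $|L(v^*)|$. Being very lordly, $v^*$ has many subservient neighbors, all of which, having strictly smaller lists, cannot lie in $\mathcal{L}$ and hence lie in $S$; a direct count shows this places $v^*$ into some $S_i$, a contradiction. So $D$ is a nonempty proper subset of $V(G)$. Each $v\in D$ is heavy or sponsored by Lemma~\ref{normal vertex structure lemma} (being neither saved-type nor very lordly), has fewer than the $\ordering$-prioritized threshold of neighbors in $S$ (else $v$ would lie in some $S_i$), and has few very lordly neighbors (else $v$ would be aberrant by Lemma~\ref{in big clique with many very lordly nbrs is aberrant}); combining these, $|N(v)\setminus D|$ is small.

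Now apply the discharging rule: each $v\in D$ sends $9\varepsilon$ charge along every edge to $V(G)\setminus D$. Since total charge is preserved, it suffices to show the post-discharging charge on $D$ is strictly positive, forcing the post-discharging charge on $G-D$ to be strictly negative. Using the pointwise bound $\Gap(v)-\Gap_{G-D}(v)\leq |N(v)\cap D|$, one verifies
$$ch(v) + 9\varepsilon|N(v)\cap D| \geq \bigl(\Save_L(v) + \varepsilon\log^{10}k\bigr) - \bigl(2\varepsilon\Gap_{G-D}(v) - 7\varepsilon(k-|L(v)| + |N(v)\cap D|)\bigr)$$
for each $v\in G-D$, and summing over $G-D$ delivers the inequality in~(b).

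The main obstacle is proving that the post-discharging charge on $D$ is strictly positive. Heavy vertices carry large individual charge directly from their definition, but sponsored vertices may be charge-poor on their own. The key will be that every sponsored vertex has many heavy neighbors, and one must show that enough of these heavy neighbors also lie in $D$ (rather than in $S\cup\mathcal{L}$) so that an averaging argument distributes their surplus onto their sponsored neighbors. Balancing this surplus against the $9\varepsilon|N(v)\setminus D|$ outflow, while controlling the contribution of very lordly neighbors through Lemma~\ref{in big clique with many very lordly nbrs is aberrant}, is the most delicate piece of the accounting.
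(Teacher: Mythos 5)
You have the right high-level strategy --- it matches the paper's, which routes this lemma through a stronger discharging claim and then closes by showing the residual structure is saved --- but there are two genuine gaps.

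First, \emph{extremely heavy vertices}. Your classification via Lemma~\ref{normal vertex structure lemma} (and the propositions underneath it, in particular Proposition~\ref{list size proposition} and Proposition~\ref{heavy vertex facts}) is proved under the standing assumption that $G$ has no vertex with $ch(v) \geq 9\varepsilon\,d(v)$. The lemma's hypotheses do not exclude such vertices, so you must dispose of them before invoking any of that structure. This is straightforward but necessary: if $u$ is extremely heavy, set $D = \{u\}$; then $u$ can afford to send $9\varepsilon$ along every edge and keep $ch_*(u) \geq 0$, and the same pointwise bound you wrote (using $\Gap_{G-u}(v) \geq \Gap_G(v) - 1$ for $v\in N(u)$) already gives outcome~(b). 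Without this reduction the structural lemmas you cite are simply not available.

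Second, \emph{positivity of $ch_*$ on $D$ is asserted, not proved, and one of your stated reasons is wrong}. You say heavy vertices carry large charge ``directly from their definition,'' but a heavy $v\in D$ also sends $9\varepsilon$ to each neighbor outside $D$, so you still need to bound $|N(v)\setminus D|$. That bound uses $v\notin S_\infty$ (so $v$ has at most $(\Save_L(v)+\varepsilon\log^{10}k)/((1-K)(1-\varepsilon'))$ neighbors in $S_\infty$) together with Proposition~\ref{heavy vertex facts}~\ref{heavy has small gap} and Lemma~\ref{in big clique with many very lordly nbrs is aberrant} (to cap the number of very lordly neighbors), and it consumes constraint~\eqref{epsilon heavy constraint}. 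For sponsored $v$, you correctly identify that the surplus must come from heavy $D$-neighbors, but you never set up the second transfer rule (the paper has each heavy $v\in D$ send half its charge, split evenly among its $D$-neighbors) nor carry out the estimate showing that the amount received --- which depends on knowing most of $v$'s heavy neighbors lie in $D$ via the $S_\infty$-count and the disjointness of ``heavy'' and ``very lordly'' --- exceeds the $9\varepsilon|N(v)\setminus D|$ outflow plus the possibly negative initial $ch(v) \geq -2\varepsilon\Gap(v)$. That accounting consumes constraint~\eqref{delta constraint}. Flagging it as ``the most delicate piece'' is accurate, but it is the heart of the proof and cannot be left as a pointer.

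Your pointwise conversion from negative total $ch_*$ on $G-D$ to the inequality in~(b), and your argument that $D\neq\varnothing$ when $S\neq V(G)$ (take a very lordly vertex of smallest list, observe its subservient neighbors must lie in $S$, invoke Proposition~\ref{normal Gap linear in save}), are both correct and align with the paper's proof.
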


For each $v\in V(G)$, let the \textit{charge} of $v$ be
\begin{equation*}
  \ch(v) = \Save_L(v) - 2\varepsilon\Gap(v) + \varepsilon\log^{10}k + 7\varepsilon(k - |L(v)|).
\end{equation*}
Now, $\sum_{v\in V(G)} \ch(v) \leq 0$.  As mentioned in Section~\ref{overview section}, we think of $D$ in Lemma~\ref{inductive discharging lemma} as the ``discharged set'', i.e.\ the vertices in $D$ will send charge to their neighbors.  When we redistribute the charges in Section~\ref{discharging subsection}, each vertex not in $D$ receives $9\varepsilon$ charge from each neighbor in $D$, and each vertex in $D$ still has positive charge.

In order to prove Lemma~\ref{inductive discharging lemma}, we define the following types of vertices.  Recall that $\eta \in (0, 1)$ satisfies~\eqref{hierarchy}.

\begin{define}\label{heavy definition}\label{very lordly definition}\label{sponsored definition}
  We say a vertex $v \in V(G)$ is
  \begin{itemize}
  \item \textit{heavy} if $\ch(v) \geq \eta \Gap(v)$ and \textit{normal} otherwise,
  \item \textit{extremely heavy} if $\ch(v) > 9\varepsilon d(v)$,
  \item \textit{very lordly} if $\Gap(v) \geq (3\delta/4)d(v)$ and $|\subserv(v)| > \Gap(v)/4$, and
  \item \textit{sponsored} if $v$ has at least $d(v)/2$ heavy neighbors $u$ with $\Save_L(u) \geq 3\eta \Gap(v)$ and $d(u) \leq (1 + \delta)d(v)$.
  \end{itemize}
\end{define}

The next lemma implies that if $v$ is an extremely heavy vertex, then $D = \{v\}$ satisfies~\ref{discharging outcome} in Lemma~\ref{inductive discharging lemma}.  Thus, we can essentially assume there are no extremely heavy vertices.
\begin{lemma}\label{no extremely heavy}
  Let $G$ be a graph with list assignment $L$ such that~\eqref{main thm equation} does not hold.  If $u\in V(G)$ is extremely heavy, then
  \begin{equation*}
    \sum_{v\in V(G-u)}(\Save_L(v) + \varepsilon\log^{10}k) < \sum_{v\in V(G - u)}(2\varepsilon \Gap_{G - u}(v) - 7\varepsilon(k - |L(v)| + |N(v)\cap \{u\}|)).
  \end{equation*}
\end{lemma}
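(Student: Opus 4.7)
The plan is to rewrite the target inequality in terms of the vertex charges $ch(v)=\Save_L(v)-2\varepsilon\Gap(v)+\varepsilon\log^{10}k+7\varepsilon(k-|L(v)|)$, and then invoke only two facts: that $\sum_{v\in V(G)} ch(v)\leq 0$ (the failure of~\eqref{main thm equation}) and that $ch(u)\geq 9\varepsilon\,d(u)$ (the extremely heavy assumption).

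First I would algebraically rearrange the conclusion so that proving it reduces to showing
\begin{equation*}
    \sum_{v\in V(G-u)} ch(v)\;+\;2\varepsilon\!\!\sum_{v\in N(u)}\!\!\bigl(\Gap(v)-\Gap_{G-u}(v)\bigr)\;+\;7\varepsilon\,d(u)\;<\;0.
\end{equation*}
This reduction uses only the definition of $ch(v)$, the fact that $\sum_{v\in V(G-u)}|N(v)\cap\{u\}|=d(u)$, and the observation that for $v\notin N(u)$ the deletion of $u$ alters neither $d(v)$ nor $\omega(v)$, so $\Gap_{G-u}(v)=\Gap(v)$ in that case.

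Next I would bound the $\Gap$-difference sum by $d(u)$. For $v\in N(u)$ we have $d_{G-u}(v)=d(v)-1$, and since removing one vertex decreases the size of the largest clique through $v$ by at most one, $\omega_{G-u}(v)\in\{\omega(v)-1,\,\omega(v)\}$. Hence
\begin{equation*}
    \Gap(v)-\Gap_{G-u}(v)=\omega_{G-u}(v)-\omega(v)+1\in\{0,1\},
\end{equation*}
and summing over $N(u)$ contributes at most $d(u)$. Therefore the left-hand side above is bounded by $\sum_{v\in V(G-u)} ch(v)+9\varepsilon\,d(u)\leq -ch(u)+9\varepsilon\,d(u)\leq 0$, applying the two hypotheses in turn.

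The only real obstacle is passing from this non-strict $\leq 0$ to the strict $<0$ in the lemma statement. This is a minor point that should be handled by the standard minimality argument in the proof of Theorem~\ref{local critical thm}: by choosing a minimal counterexample one can arrange $\sum_v ch(v)$ to be strictly negative (or, equivalently, loosen the extremely heavy threshold by an infinitesimal amount), and the computation above then delivers strict inequality. All the substantive content is the two-line algebraic rearrangement together with the $\{0,1\}$-valued bound on the $\Gap$ difference induced by deleting a single vertex.
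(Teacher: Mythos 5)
Your argument is essentially the paper's own proof, rephrased: the paper frames it as a single discharging step (let $u$ send $9\varepsilon$ to each neighbor, so that $ch_*(u)\geq 0$ by the extremely heavy assumption), and the observation ``$\Gap_{G-u}(v)\geq\Gap_G(v)-1$ for $v\in N(u)$'' is exactly your $\{0,1\}$-valued bound on the $\Gap$ difference. Unpacking the paper's $ch_*$ yields precisely your algebraic rearrangement, so the substance is identical. On the strictness point you rightly flag: the paper's proof has the very same defect, writing $\sum_{v\in V(G)}ch(v)<0$ where the hypothesis that~\eqref{main thm equation} fails delivers only $\leq 0$. Your proposed patch is not the right one, though --- there is no minimal-counterexample argument available within this lemma, and perturbing the extremely-heavy threshold would disturb its other uses (Propositions~\ref{list size proposition} and~\ref{heavy vertex facts}). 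The correct resolution is simply that the $<$ in the statement should be $\leq$: in the proof of Theorem~\ref{local critical thm}, Lemma~\ref{inductive discharging lemma}\,(\ref{discharging outcome}) is invoked and immediately used in the non-strict form $\leq$, and nothing downstream ever needs strictness. With the conclusion read as $\leq$, your computation already gives a complete proof.
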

\begin{proof}
  Let $u$ send charge $9\varepsilon$ to each of its neighbors, and denote the resulting charge $\ch_*$. Since $u$ is extremely heavy, $\ch_*(u) > 0$.  Hence,
  \begin{equation*}
    \sum_{v\in V(G - u)}\ch_*(v) < \sum_{v\in V(G)}\ch(v) \leq 0.
  \end{equation*}
  For each vertex $v\in N(u)$, we have $\Gap_{G - u}(v) \geq \Gap_G(v) - 1$.
  Hence,
  \begin{multline*}
    \sum_{v\in V(G - u)}(\Save_L(v) + \varepsilon\log^{10}k) - \sum_{v\in V(G - u)}(2\varepsilon \Gap_{G-u}(v) - 7\varepsilon(k - |L(v)| + |N(v)\cap\{u\}|)) \leq \\
    \sum_{v\in V(G - u)}(\Save_L(v) + \varepsilon\log^{10}k) - \sum_{v\in V(G)\setminus N[u]}(2\varepsilon \Gap_G(v) - 7\varepsilon(k - |L(v)|)) \\
    - \sum_{v\in N(u)}(2\varepsilon(\Gap_{G}(v) - 1) - 7\varepsilon(k - |L(v)| + 1)) = \sum_{v\in V(G - u)}\ch_*(v).
  \end{multline*}
  Now the lemma follows from the previous two inequalities.
\end{proof}

By combining Lemma~\ref{no extremely heavy} with the next lemma, Lemma~\ref{stronger discharging lemma}, we obtain Lemma~\ref{inductive discharging lemma}.  
Recall that a vertex $v\in V(G)$ is \textit{very lordly} if $\Gap(v) \geq (3\delta/4)d(v)$ and $|\subserv(v)| > \Gap(v)/4$.

\begin{lemma}\label{stronger discharging lemma}
  Let $0 < \eps \ll \mu \ll \delta \ll 1$.
  Let $G$ be a graph with list assignment $L$ not satisfying~\eqref{main thm equation} such that for each vertex $v\in V(G)$ we have $|L(v)|\leq\min\{d(v), k\}$.
  Let $S_0$ be the vertices of $G$ that are either aberrant, slightly aberrant, egalitarian-sparse, or bipartite-sparse.  For $i\geq 1$, let $S_i$ be the vertices with at least
\begin{equation*}
  \frac{\Save_L(v) + \varepsilon\log^{10}k}{\mu}
\end{equation*}
neighbors in $\cup_{j=0}^{i - 1}S_{i}$, and define $S_\infty = \cup_{i\geq 1}S_i$.  Let $\mathcal L$ be the very lordly vertices not in $S_\infty$, and let $D = V(G)\setminus (S_\infty \cup \mathcal L)$.
  If $G$ has no extremely heavy vertex and no dense subgraph with respect to $L$, then 
  \begin{equation*}
    \sum_{v\in V(G-D)}(\Save_L(v) + \varepsilon\log^{10}k) < \sum_{v\in V(G-D)}(2\varepsilon\Gap_{G-D} (v) - 7\varepsilon(k - |L(v)| + |N(v)\cap D|)).
  \end{equation*}
\end{lemma}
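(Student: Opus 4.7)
I apply the single discharging rule that each $v \in D$ sends $9\varepsilon$ units of charge to every neighbor in $V(G) \setminus D = S_\infty \cup \mathcal{L}$; write $ch'$ for the resulting charges. Since total charge is preserved and $\sum_{v} ch(v) < 0$ by the hypothesis that~\eqref{main thm equation} fails, it suffices to prove the per-vertex bound
\begin{equation*}
  ch(v) \,\geq\, 9\varepsilon\,|N(v) \setminus D|\qquad \text{for every } v \in D. \tag{$\star$}
\end{equation*}
Indeed, $(\star)$ gives $ch'(v) \geq 0$ for each $v \in D$, hence $\sum_{u \in V(G) \setminus D} ch'(u) < 0$; expanding $ch'(u) = ch(u) + 9\varepsilon|N(u) \cap D|$ and invoking the inequality $\Gap_{G-D}(u) \geq \Gap_G(u) - |N(u) \cap D|$ converts this into the inequality in the lemma's statement.

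The right-hand side of $(\star)$ is bounded in two pieces. By construction, $v \notin S_i$ for any $i \geq 1$, so
\begin{equation*}
  |N(v) \cap S_\infty| \,<\, \frac{\Save_L(v) + \varepsilon\log^{10}k}{(1-K)(1-\varepsilon')}.
\end{equation*}
For $|N(v) \cap \mathcal{L}|$, I would invoke Lemma~\ref{in big clique with many very lordly nbrs is aberrant} (previewed in the overview of Section~\ref{overview section}): since $v \notin S_0$ and in particular $v$ is not aberrant, that lemma bounds $|N(v) \cap \mathcal{L}|$ by a constant multiple of $(\Save_L(v) + \varepsilon\log^{10}k)/\aberranceConst(\alpha,\varepsilon')$. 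Adding, the right-hand side of $(\star)$ is at most an explicit constant times $\Save_L(v) + \varepsilon\log^{10}k$.

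For the left-hand side, I appeal to the structural dichotomy from Section~\ref{discharging structure section}. Because $G$ has no dense subgraph with respect to $L$ and $v$ is neither aberrant, slightly aberrant, egalitarian-sparse, bipartite-sparse, nor very lordly, Lemma~\ref{normal vertex structure lemma} forces $v$ to be either heavy or sponsored. For heavy $v$, $ch(v) \geq (36\varepsilon/\delta)\Gap(v)$ by definition, and since $v$ is not egalitarian-sparse and $|L(v)| \leq d(v)$, $\Gap(v)$ is comparable to $\Save_L(v) + \varepsilon\log^{10}k$. For sponsored $v$, the definition uses the $7\varepsilon(k - |L(v)|)$ summand of the initial charge to produce a lower bound on $ch(v)$ of the same form. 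With $\varepsilon \leq 2.6 \cdot 10^{-10}$, both lower bounds dominate the right-hand side of $(\star)$, completing the argument.

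The main obstacle is the sponsored case, where $\Gap(v)$ is small and the heavy bound does not directly apply. This is precisely why Theorem~\ref{local critical thm} (rather than Theorem~\ref{nice local critical thm}) includes the $7\varepsilon(k - |L(v)|)$ term in the charge: sponsored vertices tend to have small $|L(v)|$ (having ``absorbed'' charge from many heavy neighbors in the iterative discharging picture described in Section~\ref{overview section}), giving them enough stored charge to afford the $9\varepsilon$ payments to $S_\infty \cup \mathcal{L}$. Verifying the explicit constant bounds with $\alpha, \delta, \varepsilon', K$, and $\varepsilon$ is then a routine calculation.
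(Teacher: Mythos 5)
Your overall architecture (a single round of discharging, then the algebraic conversion using $\Gap_{G-D}(u) \geq \Gap_G(u) - |N(u)\cap D|$) is sound, and the bound on $|N(v)\cap(S_\infty\cup\mathcal{L})|$ via the definition of $S_\infty$ and Lemma~\ref{in big clique with many very lordly nbrs is aberrant} matches the paper. The heavy case of $(\star)$ is also essentially the paper's Lemma~\ref{heavy sends less than half charge outside D}.

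However, $(\star)$ is \emph{false} for sponsored $v$, and this is the crux of the argument that your plan misses. A normal vertex in $D$ can have $ch(v)$ as low as $-2\varepsilon\Gap(v)$ (the initial charge is $\Save_L(v) + \varepsilon\log^{10}k - 2\varepsilon\Gap(v) + 7\varepsilon(k - |L(v)|)$, and the last summand carries no lower bound: for a sponsored vertex there is nothing forcing $|L(v)|$ to be much smaller than $k$, so that term can be $0$). Your proposed fix --- that sponsored vertices have small $|L(v)|$ and thus stored charge --- does not hold; the overview's remark about ``stored extra charge'' refers to vertices \emph{outside} $D$ receiving charge across iterations of Lemma~\ref{inductive discharging lemma}, not to $v\in D$ having a usable surplus. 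The paper instead uses a second discharging rule (R2): each heavy $v\in D$ sends half its remaining charge, split evenly, to its neighbors \emph{inside} $D$. Since a sponsored $v$ has, by definition, at least $d(v)/2$ heavy neighbors $u$ with $\Save_L(u)$ large and $d(u)$ controlled, and since by Proposition~\ref{heavy vertex facts}\ref{heavy charge linear in save} a heavy vertex's charge is proportional to $\Save_L(u)$, those heavy neighbors collectively deliver at least $\Omega(\Gap(v))$ charge to $v$ under R2, dominating both the $2\varepsilon\Gap(v)$ deficit and the $9\varepsilon|N(v)\setminus D|$ payments (Lemma~\ref{normal in D has positive final charge}). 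Without this intra-$D$ transfer, the sponsored case does not close, so your single-rule plan has a genuine gap.
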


In Section~\ref{proof of discharging lemma section}, we prove Lemma~\ref{inductive discharging lemma} using Lemmas~\ref{no extremely heavy} and \ref{stronger discharging lemma}.  Sections~\ref{discharging preliminary section}, \ref{discharging structure section}, and \ref{discharging subsection} are devoted to the proof of Lemma~\ref{stronger discharging lemma}.

\subsection{Preliminaries}\label{discharging preliminary section}

Since Sections~\ref{discharging preliminary section}, \ref{discharging structure section}, and \ref{discharging subsection} are devoted to the proof of Lemma~\ref{stronger discharging lemma}, we assume in these sections that $G$ is a graph with list assignment $L$ not satisfying~\eqref{main thm equation} such that $|L(v)| \leq \min\{d(v), k\}$ for each vertex $v$, and moreover $G$ does not contain a dense subgraph or any extremely heavy vertices.  Using this assumption, we prove several useful propositions in this subsection.

We need the following proposition about the sizes of vertices' lists of available colors.  In this proposition, we need that there are no extremely heavy vertices.
\begin{prop}\label{list size proposition}
  Let $0 < \eps \ll 1$.  If $v\in V(G)$, then 
  \begin{enumerate}[(a)]
  \item \label{all lists are linear in d}
    $|L(v)| \geq (1 - 11\varepsilon)d(v)$ and $\Save_L(v) < 11\varepsilon d(v)$, and
  \item \label{all lists are linear in k}
    $|L(v)| > k/3$.
  \end{enumerate}
\end{prop}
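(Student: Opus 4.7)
The plan is to use the hypothesis that $G$ has no extremely heavy vertices to derive a single master inequality for each vertex, from which both parts of the proposition follow by elementary arithmetic. First I would observe that $\omega(v) \geq 1$ (since $\{v\}$ is itself a clique), so $\Gap(v) = d(v) + 1 - \omega(v) \leq d(v)$. Since $v$ is not extremely heavy, $ch(v) < 9\varepsilon d(v)$; expanding the definition of $ch(v)$ and applying $\Gap(v) \leq d(v)$ then yields the master inequality
$$\Save_L(v) + \varepsilon \log^{10} k + 7\varepsilon(k - |L(v)|) < 11\varepsilon d(v).$$

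For part~(a), both $\varepsilon \log^{10} k$ and $7\varepsilon(k - |L(v)|)$ are nonnegative (the latter by the standing hypothesis $|L(v)| \leq k$), so dropping them from the master inequality gives $\Save_L(v) < 11\varepsilon d(v)$. Substituting $\Save_L(v) = d(v) + 1 - |L(v)|$ and rearranging then yields $|L(v)| > (1 - 11\varepsilon)d(v) + 1 \geq (1 - 11\varepsilon)d(v)$.

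For part~(b), I would argue by contradiction: suppose $|L(v)| \leq k/3$, so that $k - |L(v)| \geq 2k/3$. Dropping the nonnegative $\Save_L(v)$ and $\varepsilon \log^{10} k$ terms from the master inequality gives $14\varepsilon k/3 < 11\varepsilon d(v)$, so $d(v) > 14k/33$. On the other hand, part~(a) gives $d(v) < |L(v)|/(1 - 11\varepsilon) \leq k/(3(1 - 11\varepsilon))$. Combining these two bounds on $d(v)$ and simplifying collapses to $14(1 - 11\varepsilon) < 11$, i.e., $\varepsilon > 3/154$, contradicting the hypothesis $\varepsilon \leq 3/154$. There is no real obstacle here — the only minor care needed is to use the tight bound $\Gap(v) \leq d(v)$ rather than the looser $\Gap(v) \leq d(v) + 1$, which would introduce a stray $+2\varepsilon$ term on the right of the master inequality that would have to be absorbed by explicitly invoking ``$k$ sufficiently large'' and would also slightly weaken the threshold in~(b).
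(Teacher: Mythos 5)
Your proof is correct and follows essentially the same route as the paper: extract the inequality $ch(v) < 9\varepsilon d(v)$ from the no-extremely-heavy hypothesis, bound $\Gap(v) \leq d(v)$, and then for each part drop the appropriate nonnegative summands from the expansion of $ch(v)$. The only cosmetic difference is that you frame part (b) as a contradiction where the paper argues directly ($7k < 11d(v) + 7|L(v)| \leq 21|L(v)|$), but the underlying arithmetic is identical.
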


\begin{proof}
  First we prove (a).  Since $v$ is not extremely heavy, $9\varepsilon d(v) \geq \ch(v) > \Save_L(v) - 2\varepsilon\Gap(v)$.  Hence, since $\Gap(v) \leq d(v)$, we have $11\varepsilon d(v) > \Save_L(v)$, as desired.  Therefore $d(v) + 1 - |L(v)| < 11\varepsilon d(v)$, so $|L(v)| > (1 - 11\varepsilon)d(v)$, as desired.

  Now we prove (b).  
  Since $v$ is not extremely heavy, $9\varepsilon d(v) \geq \ch(v) > 7\varepsilon(k - |L(v)|) - 2\varepsilon \Gap(v)$.  Hence, since $\Gap(v) \leq d(v)$, we have $11d(v) + 7|L(v)| > 7k$.  By \ref{all lists are linear in d}, $d(v) \leq |L(v)|/(1 - 11\varepsilon)$, and since $\varepsilon \ll 1$, we have $d(v) \leq 14|L(v)|/11$.  Therefore $7k < 11d(v) + 7|L(v)| \leq 21|L(v)|$, so $|L(v)| > k/3$, as desired.
\end{proof}

Proposition~\ref{list size proposition} \ref{all lists are linear in k} reveals why we need to add the term $7\varepsilon(k - |L(v)|)$ in Theorem~\ref{local critical thm}.  Note that Proposition~\ref{list size proposition} \ref{all lists are linear in k} implies that all neighbors of a vertex are $2/3$-egalitarian.  This fact will be crucial in Lemma~\ref{normal vertex structure lemma} \ref{normal small gap and large subservient implies bipartite-sparse}.

The next proposition provides useful facts about the heavy vertices.
\begin{prop}\label{heavy vertex facts}
  Let $0 < \eps \ll \eta \ll \delta \ll 1$.
  If $v\in V(G)$ is heavy, then
  \begin{enumerate}[(a)]
  \item $\Gap(v) \leq (\delta/4)d(v)$\label{heavy has small gap}, and 
  \item \label{heavy charge linear in save}
    $\ch(v) > \frac{\Save_L(v) + \varepsilon\log^{10}k}{1 + \eta}.$
  \end{enumerate}
\end{prop}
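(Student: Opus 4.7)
The plan is to prove both parts by unpacking the definition of $ch(v)$ and combining the hypothesis that $v$ is heavy with the standing assumption that $G$ contains no extremely heavy vertex. Both parts reduce to a short algebraic manipulation, and I do not expect any real obstacle.

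For part \ref{heavy has small gap}, I would chain together the two relevant inequalities on $ch(v)$. By definition of heavy, $ch(v) \geq (36\varepsilon/\delta)\Gap(v)$, and by the standing assumption that $v$ is not extremely heavy, $ch(v) < 9\varepsilon\,d(v)$. Putting these together yields
\begin{equation*}
  \frac{36\varepsilon}{\delta}\Gap(v) \leq ch(v) < 9\varepsilon\,d(v),
\end{equation*}
and solving for $\Gap(v)$ gives $\Gap(v) < (\delta/4)d(v)$, which is part~\ref{heavy has small gap}.

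For part \ref{heavy charge linear in save}, I would start from the definition
\begin{equation*}
  ch(v) = \Save_L(v) - 2\varepsilon\Gap(v) + \varepsilon\log^{10}k + 7\varepsilon(k - |L(v)|).
\end{equation*}
Since $|L(v)| \leq k$, the last term is nonnegative, so $ch(v) \geq \Save_L(v) + \varepsilon\log^{10}k - 2\varepsilon\Gap(v)$. Now the heavy hypothesis $\Gap(v) \leq (\delta/(36\varepsilon))ch(v)$ lets me replace $2\varepsilon\Gap(v)$ by at most $(\delta/18)ch(v)$, yielding
\begin{equation*}
  ch(v) \geq \Save_L(v) + \varepsilon\log^{10}k - \frac{\delta}{18}ch(v),
\end{equation*}
and rearranging gives the desired bound $ch(v) \geq (\Save_L(v) + \varepsilon\log^{10}k)/(1 + \delta/18)$. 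The strict inequality in the statement can be recovered either by noting that $\varepsilon\log^{10}k > 0$ while the remaining inequalities in the chain are either strict (from the "not extremely heavy" step, if ever invoked) or can be argued to be so after a careful accounting; in any event the $\geq$ version follows immediately from the above and suffices for the way this proposition is used.

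The only care needed is to make sure the direction of the heavy condition is used correctly and that the nonnegativity of $7\varepsilon(k-|L(v)|)$ is noted explicitly so that the bound on $ch(v)$ is a true lower bound rather than an equality. Beyond that, the proof is a two-line computation in each part.
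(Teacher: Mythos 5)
Your proof matches the paper's argument essentially line for line: for part (a) you sandwich $ch(v)$ between the heavy lower bound $(36\varepsilon/\delta)\Gap(v)$ and the not-extremely-heavy upper bound $9\varepsilon\,d(v)$, and for part (b) you drop the nonnegative $7\varepsilon(k-|L(v)|)$ term, replace $2\varepsilon\Gap(v)$ by $(\delta/18)ch(v)$ using the heavy hypothesis, and rearrange — exactly as the paper does. Your hesitation about the strict inequality is actually a sharper reading than the paper's own proof, which silently upgrades the heavy condition from $ch(v)\geq(36\varepsilon/\delta)\Gap(v)$ (as defined) to a strict $>$; as you observe, the $\geq$ version of (b) is all that is really needed downstream, since the other factor of $1/4$ in Lemma~\ref{heavy sends less than half charge outside D} comes from the strict inequality $\delta/4<1/4$.
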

\begin{proof}
  First we prove (a).  Since $v$ is not extremely heavy, $\ch(v) \leq 9\varepsilon d(v)$.  Since $\ch(v) \geq \eta\Gap(v)$, we have $\Gap(v) \leq 9\eps d(v) / \eta \leq (\delta/4)d(v)$, as desired.

  Now we prove (b).
  Since $v$ is heavy, $\ch(v) > \eta\Gap(v)$.  Hence, $2\varepsilon\Gap(v) < 2\eps \ch(v) / \eta < \eta \ch(v)$.  Therefore
  \begin{equation*}
    \ch(v) > \Save_L(v) - \eta \ch(v) + \varepsilon\log^{10}k + 7\varepsilon(k - |L(v)|),
  \end{equation*}
  and the result follows by rearranging terms.
\end{proof}

The heavy vertices in $D$ will send charge to their neighbors in $S_\infty$.  Assuming $\varepsilon$ is small enough, Proposition~\ref{heavy vertex facts} \ref{heavy charge linear in save} implies that these vertices will have plenty of charge to send to these neighbors.  Proposition~\ref{heavy vertex facts} \ref{heavy has small gap}, in conjunction with Lemma~\ref{in big clique with many very lordly nbrs is aberrant}, implies that heavy vertices with many very lordly neighbors are aberrant.  Thus, heavy vertices in $D$ do not have to send too much charge to very lordly neighbors.

The next proposition implies that if $v$ is a normal vertex, then $\Save_L(v) + \varepsilon\log^{10}k$ is a fraction of $\Gap(v)$.  Thus, the main result in~\cite[Theorem 1.7]{KP18} implies that if every vertex is normal, then for $\varepsilon$ small enough, the graph is $L$-colorable.
\begin{prop}\label{normal Gap linear in save}
  Let $0 < \eps \ll \eta \ll 1$.
  If $v\in V(G)$ is normal, then
  \begin{equation*}
    \Gap(v) \geq \frac{\Save_L(v) + \varepsilon\log^{10}k}{\eta + 2\eps}.
  \end{equation*}
\end{prop}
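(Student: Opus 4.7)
The plan is to unfold the definitions directly and rearrange. Since $v$ is normal, by Definition~\ref{heavy definition} we have $ch(v) < (36\varepsilon/\delta)\Gap(v)$. Substituting the definition
\[
ch(v) = \Save_L(v) - 2\varepsilon\Gap(v) + \varepsilon\log^{10}k + 7\varepsilon(k - |L(v)|)
\]
into this strict inequality gives
\[
\Save_L(v) - 2\varepsilon\Gap(v) + \varepsilon\log^{10}k + 7\varepsilon(k - |L(v)|) < (36\varepsilon/\delta)\Gap(v).
\]
The hypothesis of the entire section ensures $|L(v)| \leq k$, so the term $7\varepsilon(k - |L(v)|)$ on the left is non-negative and can be dropped. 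Moving the $-2\varepsilon\Gap(v)$ term to the right yields
\[
\Save_L(v) + \varepsilon\log^{10}k < \varepsilon(36/\delta + 2)\Gap(v),
\]
which rearranges to the desired bound (noting the statement uses a non-strict inequality, so the strict bound is enough).

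This is essentially a one-line calculation with no real obstacle; the only substantive step is the observation that the $7\varepsilon(k-|L(v)|)$ term is non-negative, which follows immediately from $|L(v)| \leq k$. I would present the proof as the two displayed inequalities above followed by the one-line rearrangement.
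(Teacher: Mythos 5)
Your proof is correct and follows essentially the same route as the paper: unfold the definition of $ch(v)$, use the normality bound $ch(v) < (36\varepsilon/\delta)\Gap(v)$, and drop the nonnegative term $7\varepsilon(k - |L(v)|)$ using the standing assumption $|L(v)| \leq k$. In fact your write-up is slightly cleaner than the paper's: the paper states the intermediate inequality as $ch(v) \leq \Save_L(v) - 2\varepsilon\Gap(v) + \varepsilon\log^{10}k$, which is a typo for $\geq$ (it is exactly the observation that the dropped term is nonnegative), whereas you get the direction right.
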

\begin{proof}
  Since $v$ is normal, $\ch(v) \leq \eta\Gap(v)$, and since $|L(v)| \leq k$, we have $\ch(v) \geq \Save_L(v) - 2\varepsilon\Gap(v) + \varepsilon \log^{10} k$.  Therefore $\Save_L(v) + \varepsilon\log^{10} k \leq \eta \Gap(v) + 2\varepsilon\Gap(v)$, and the result follows by rearranging terms.
\end{proof}

\subsection{Structure}\label{discharging structure section}

In this subsection we prove Lemma~\ref{normal vertex structure lemma}, which implies that every normal vertex not in $S_\infty$ is either very lordly or sponsored (recall that a vertex $v\in V(G)$ is \textit{sponsored} if it has at least $d(v)/2$ heavy neighbors $u$ with $\Save_L(u) \geq 3\eta\Gap(v)$ and $d(u) \leq (1 + \delta)d(v)$).  In the latter case the vertex is in $D$, and the charge it receives from its heavy neighbors compensates for the charge it sends to its neighbors not in $D$.  We also prove Lemma~\ref{in big clique with many very lordly nbrs is aberrant}, which implies that a vertex in $D$ does not have too many very lordly neighbors.  In Section~\ref{discharging subsection}, we use these two lemmas to show that after redistributing charges, the vertices in $D$ all have positive charge.

  

\begin{lemma}\label{normal vertex structure lemma}
  Let $0 < \eps \ll \eta \ll \mu \ll \delta \ll 1$.  Let $v\in V(G)$ be a normal vertex.
  \begin{enumerate}[(a)]
  \item If $\Gap(v) \geq (3\delta/4)d(v)$, then $v$ is either aberrant, egalitarian-sparse, or very lordly.\label{normal with large gap is saved or lordly}
  \item If $\Gap(v) \in [(\delta/4)d(v), (3\delta/4)d(v))$, then $v$ is either aberrant, bipartite-sparse, or egalitarian-sparse.\label{normal with medium gap is saved}
  \item If $\Gap(v) \leq (\delta/4)d(v)$, then $v$ is either aberrant, slightly aberrant, bipartite-sparse, egalitarian-sparse, or sponsored.\label{normal with small gap is saved or loaded}
  \end{enumerate}
\end{lemma}

The proof of Lemma~\ref{normal vertex structure lemma} comprises most of this subsection.  First, we state the other important lemma of this subsection, which we prove at the end of the subsection, after first proving Proposition~\ref{very lordly nbr in big clique is lordlier}, which implies that many very lordly neighbors of a vertex $v$ that is heavy or sponsored are lordlier.

\begin{lemma}\label{in big clique with many very lordly nbrs is aberrant}
  Let $0 < \eps \ll \mu  \ll 1$.  Let $v\in V(G)$ be a vertex such that $\Gap(v) \leq (\delta/4)d(v)$.  If $v$ has at least
  \begin{equation*}
    \frac{\Save_L(v) + \varepsilon\log^{10}k}{\mu} + \Gap(v)
  \end{equation*}
  very lordly neighbors, then $v$ is aberrant.
\end{lemma}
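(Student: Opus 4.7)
The plan is to exploit the hypothesis $\Gap(v) \leq (\delta/4)d(v)$, which forces $v$ to sit inside a very large clique, and then to argue that any very lordly neighbor of $v$ lying inside that clique is automatically lordlier to $v$. First I would observe that $\omega(v) \geq d(v) - \Gap(v) + 1 \geq (1 - \delta/4)d(v) + 1$, and fix a maximum clique $K$ containing $v$. Then $|K \cap N(v)| = \omega(v) - 1 \geq d(v) - \Gap(v)$, so at most $\Gap(v)$ neighbors of $v$ lie outside $K$. Writing $N^{\star}(v)$ for the set of very lordly neighbors of $v$, the hypothesis of the lemma gives
\[
|N^{\star}(v) \cap K| \;\geq\; |N^{\star}(v)| - \Gap(v) \;\geq\; \frac{\Save_L(v) + \varepsilon\log^{10}k}{\aberranceConst(\alpha, \delta, \varepsilon')}.
\]

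The central claim is that every $u \in N^{\star}(v) \cap K$ is lordlier to $v$, i.e., $|L(u)| \geq (1+\alpha)|L(v)|$. Indeed, since $K$ is a clique containing $u$, we have $\omega(u) \geq |K| \geq (1-\delta/4)d(v) + 1$; since $u$ is very lordly, $\Gap(u) \geq (3\delta/4)d(u)$, equivalently $\omega(u) \leq (1 - 3\delta/4)d(u) + 1$. Combining these two bounds on $\omega(u)$ gives $d(u) \geq \frac{4-\delta}{4-3\delta}\,d(v)$. Applying Proposition~\ref{list size proposition}(a) to $u$, which via the standing comparison between $11\varepsilon$ and $\varepsilon'$ yields $|L(u)| \geq (1-\varepsilon')d(u)$, together with the trivial bound $|L(v)| \leq d(v)$, I obtain
\[
\frac{|L(u)|}{|L(v)|} \;\geq\; \frac{(1-\varepsilon')(4-\delta)}{4-3\delta} \;\geq\; 1+\alpha,
\]
where the last inequality is precisely the rearrangement of constraint~\eqref{alpha constraint}.

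Combining these pieces yields $|\aberrant(v)| \geq |N^{\star}(v) \cap K| \geq (\Save_L(v) + \varepsilon\log^{10}k)/\aberranceConst(\alpha, \delta, \varepsilon')$, which, after matching the two versions of $\aberranceConst$ and comparing $\varepsilon$ against $11\varepsilon'$, meets the aberrance threshold in Definition~\ref{ways to save definition}, so $v$ is aberrant. The main obstacle I anticipate is the structural step pinning down $d(u) \geq \frac{4-\delta}{4-3\delta}\,d(v)$ for each very lordly $u \in K$, which is what turns the clique containment into list-size growth; the rest is essentially bookkeeping, with the $+\Gap(v)$ buffer in the hypothesis engineered precisely to absorb the at most $\Gap(v)$ very lordly neighbors of $v$ that could fail to lie inside $K$.
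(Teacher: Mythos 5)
Your proof is correct and follows essentially the same route as the paper's. The paper factors out the central claim as a separate proposition (Proposition~\ref{very lordly nbr in big clique is lordlier}: a very lordly neighbor $u$ of $v$ with $\omega(u) \geq (1-\delta/4)d(v)+1$ lies in $\aberrant(v)$), and then observes exactly as you do that at most $\Gap(v)$ neighbors of $v$ avoid a maximum clique through $v$, while any neighbor inside that clique satisfies $\omega(u) \geq \omega(v) \geq (1-\delta/4)d(v)+1$; your chain $(1-3\delta/4)d(u) \geq (1-\delta/4)d(v)$ followed by Proposition~\ref{list size proposition}(a) and the algebraic identity behind~\eqref{alpha constraint} is the same computation the paper performs inside that proposition, merely arranged through $d(u)/d(v)$ rather than through $|L(v)|$ directly.
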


The following lemma will be used to prove Lemma~\ref{normal vertex structure lemma}.
\begin{lemma}\label{normal refined structure}
  Let $0 < \eps \ll \eta \ll \mu \ll \delta \ll 1$.  Let $v\in V(G)$ be a normal vertex.
  \begin{enumerate}[(a)]
  \item \label{normal and subservient implies aberrance}
    If either $|\aberrant(v)| \geq \Gap(v)/4$ or $|\slaberrant(v)| \geq d(v)/4$, then $v$ is aberrant or slightly aberrant.
  \item \label{normal small gap and large subservient implies bipartite-sparse}
    If $\Gap(v) < (3\delta / 4)d(v)$, $|\aberrant(v)| < \Gap(v)/4$, and $|\subserv(v)| \geq \Gap(v) / 4$, then $v$ is bipartite-sparse.
  \item \label{normal large egal implies aberrance}
    If $\Gap(v) \geq (\delta/4)d(v)$ and $|\subserv(v)\cup\aberrant(v)| < \Gap(v)/2$,  then $v$ is egalitarian-sparse.
  \item \label{normal small gap small subserv small lordly}
    If $\Gap(v) \leq (\delta/4)d(v)$, $|\subserv(v)\cup\aberrant(v)| < \Gap(v)/2$, and $|\slaberrant(v)| < d(v) / 4$, then $v$ is either egalitarian-sparse or sponsored.
  \end{enumerate}
\end{lemma}
\begin{proof}[Proof of Lemma~\ref{normal refined structure} \ref{normal and subservient implies aberrance}]
  First, suppose $|\aberrant(v)| \geq \Gap(v)/4$.  Hence, since $v$ is normal, by Proposition~\ref{normal Gap linear in save}, since $\varepsilon \ll \eta \ll \mu$,
  \begin{equation*}
    |\aberrant(v)| \geq \frac{\Save_L(v) + \varepsilon\log^{10}k}{4\eta + 8\eps} \geq \frac{\Save_L(v) + \varepsilon\log^{10}k}{\mu},
  \end{equation*}
  so $v$ is aberrant, as desired.

    Therefore we may assume $|\slaberrant(v)| \geq d(v)/4 = (\Gap(v)d(v))/(4\Gap(v)).$  By Proposition~\ref{normal Gap linear in save}, since $\varepsilon \ll \eta \ll \mu$,
    \begin{equation*}
      |\slaberrant(v)| \geq \left(\frac{d(v)}{\Gap(v)}\right)\left(\frac{\Save_L(v) + \varepsilon\log^{10}k}{4\eta + 8\eps}\right) \geq \left(\frac{d(v)}{\Gap(v)}\right)\left(\frac{\Save_L(v) + \varepsilon\log^{10}k}{\mu}\right),
    \end{equation*}
    so $v$ is slightly aberrant, as desired.
\end{proof}

\begin{proof}[Proof of Lemma~\ref{normal refined structure} \ref{normal small gap and large subservient implies bipartite-sparse}]
  Let $B$ be a maximum cardinality clique in $G[N(v)\setminus \aberrant(v)]$.  Note that $|B| \geq \omega(v) - 1 - |\aberrant(v)|$.  Since $\Gap(v) < (3\delta/4)d(v)$, we have $\omega(v) - 1 \geq (1 - 3\delta/4)d(v)$, and since $|\aberrant(v)| < \Gap(v)/4 < (3\delta/16)d(v)$, we have
  \begin{equation}\label{B is large}
    |B| \geq (1 - 15\delta/16)d(v).
  \end{equation}
  Let $A = \subserv(v)$.  We claim that $(A, B)$ is a half-egalitarian bipartition for $v$.  By Proposition~\ref{list size proposition} \ref{all lists are linear in k}, $A\subseteq \egal_{2/3}(v)$.  Since $\varepsilon \ll \delta \ll 1$, by Proposition~\ref{list size proposition} \ref{all lists are linear in d} and~\eqref{B is large}, for each $u\in B$, we have $|L(u)| \geq (1 - 11\varepsilon)|B| \geq (1 - \delta)d(v) \geq (1 - \delta)|L(v)|$.  Hence, $B\subseteq \egal(v)$.  By Proposition~\ref{list size proposition} \ref{all lists are linear in d}, for each $u\in A$, we have $d(u) \leq (1 - \delta)|L(v)|/(1 - 11\varepsilon) \leq (1 - \delta)d(v)/(1 - 11\varepsilon)$.  Hence, by~\eqref{B is large}, each $u\in A$ has at least $(1 - 15\delta/16 - (1 - \delta)/(1 - 11\varepsilon))d(v) \geq \delta d(v) / 20$ non-neighbors in $B$, as required.  Therefore $(A, B)$ is a half-egalitarian bipartition for $v$, as claimed.

  Since $|A| = |\subserv(v)| \geq \Gap(v) / 4$, by Proposition~\ref{normal Gap linear in save},
  \begin{equation*}
    |A| \geq \frac{\Save_L(v) + \varepsilon\log^{10}k}{4\eta + 8\eps} \geq 100\cdot\frac{\Save_L(v) + \varepsilon\log^{10}k}{\mu},
  \end{equation*}
  so $v$ is bipartite-sparse, as desired.
\end{proof}

Lemma~\ref{normal refined structure} \ref{normal and subservient implies aberrance} and \ref{normal small gap and large subservient implies bipartite-sparse} together imply that if a vertex $v\in D$ satisfies $\Gap(v) < (3\delta/4)d(v)$, then $v$ has many egalitarian neighbors.  Our next goal is to prove that if these vertices are not egalitarian-sparse, they have many heavy neighbors.  We use the fact that the egalitarian neighbors of a vertex do not induce a dense subgraph with respect to $L$, so it will be useful to bound the value of $\Save_L$, as in the next two propositions.

\begin{prop}\label{egal nbr save bound}
  Let $0 < \eps \ll \delta \ll 1$.
  If $u$ is an egalitarian neighbor of a vertex $v$ (i.e.\ $u\in \egal(v)$), then
  \begin{equation*}
    \Save_L(u) \leq 12 \varepsilon d(v).
  \end{equation*}
\end{prop}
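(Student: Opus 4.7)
The plan is to bound $\Save_L(u)$ by showing it is a small multiple of $d(u)$, and separately to bound $d(u)$ in terms of $d(v)$; then chain the two inequalities.

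First I would invoke Proposition~\ref{list size proposition}\ref{all lists are linear in d} applied to $u$, which gives the two facts $\Save_L(u) < 11\varepsilon\,d(u)$ and $|L(u)| > (1 - 11\varepsilon)d(u)$.  The parameter constraints collected at the start of Section~\ref{discharging preliminary section}, in particular~\eqref{aberrance epsilon constraint} together with the fact that $\aberranceConst(\alpha,\varepsilon')<1$ and $4(36/\delta+2)>152$, force $\varepsilon$ to be so small compared to $\varepsilon'$ that $11\varepsilon \leq \varepsilon'$.  Substituting this into the two facts above gives $\Save_L(u) \leq \varepsilon'\,d(u)$ and $d(u) \leq |L(u)|/(1 - \varepsilon')$.

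Next I would use the hypothesis that $u$ is an egalitarian neighbor of $v$, which by definition means $|L(u)| < (1 + \alpha)|L(v)|$.  Combined with the standing assumption $|L(v)| \leq d(v)$, this yields $|L(u)| \leq (1 + \alpha)d(v)$, and therefore
\begin{equation*}
  d(u) \;\leq\; \frac{(1 + \alpha)d(v)}{1 - \varepsilon'}.
\end{equation*}

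Chaining the two bounds then produces
\begin{equation*}
  \Save_L(u) \;\leq\; \varepsilon'\,d(u) \;\leq\; \varepsilon'\,\frac{(1 + \alpha)d(v)}{1 - \varepsilon'} \;\leq\; \varepsilon'\!\left(\frac{1 + \alpha}{1 - \varepsilon'}d(v) + 1\right),
\end{equation*}
which is precisely the required inequality (the final ``$+1$'' slack is not even needed).  The only step that requires any care is the bookkeeping check that the parameter constraints of this section really do force $11\varepsilon \leq \varepsilon'$; aside from this sanity check, the proof is a two-line manipulation of the definitions.
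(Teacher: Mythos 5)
Your overall approach matches the paper's: apply Proposition~\ref{list size proposition}\ref{all lists are linear in d} to $u$ to get $\Save_L(u) < 11\varepsilon\,d(u)$ and $d(u) < |L(u)|/(1 - 11\varepsilon)$, then use $u\in\egal(v)$ to bound $|L(u)| < (1+\alpha)|L(v)| \leq (1+\alpha)d(v)$, and chain. The chain of inequalities is correct and the final $+1$ is indeed unnecessary slack.

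However, the step you flag as ``the only step that requires any care'' is precisely where the justification goes wrong. Constraint~\eqref{aberrance epsilon constraint} reads $\varepsilon \leq \aberranceConst(\alpha,\varepsilon')/(4(36/\delta+2))$, which is an \emph{upper} bound on $\varepsilon$ in terms of an expression in $\alpha,\varepsilon',\delta$. It does not provide any \emph{lower} bound on $\varepsilon'$ in terms of $\varepsilon$. Since $\aberranceConst(\alpha,\varepsilon') = K\alpha(1-\varepsilon')/(1+\alpha)$ stays bounded away from $0$ as $\varepsilon' \to 0$, one could in principle take $\varepsilon'$ far smaller than $\varepsilon$ and still satisfy~\eqref{aberrance epsilon constraint}; the inequality $11\varepsilon \leq \varepsilon'$ is simply not a consequence of the parameter constraints listed in Section~\ref{discharging preliminary section}. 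The reason it holds is that the paper \emph{defines} $\varepsilon' := 11\varepsilon$ --- this choice is stated explicitly in the proof of Lemma~\ref{stronger discharging lemma} (``Recall that $\sigma = 2/3$, $\varepsilon' = 11\varepsilon$''), and it is the relationship used implicitly throughout this section whenever Proposition~\ref{list size proposition} (phrased with $11\varepsilon$) is combined with quantities phrased in terms of $\varepsilon'$. Replace the appeal to~\eqref{aberrance epsilon constraint} with a citation of that definition and the proof is sound.
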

\begin{proof}
  Since $u\in \egal(v)$, by Proposition~\ref{list size proposition} \ref{all lists are linear in d},
  \begin{equation*}
    d(u) \leq \frac{|L(u)|}{1 - 11\eps} \leq \frac{1 + \delta / 3}{1 - 11 \eps}|L(v)| \leq \frac{1 + \delta / 3}{1 - 11\eps}d(v) \leq \frac{12}{11}d(v),
  \end{equation*}
  so again by Proposition~\ref{list size proposition} \ref{all lists are linear in d}, $\Save_L(u) \leq 11\eps d(u) \leq 12\eps d(v)$, as desired.
\end{proof}

\begin{prop}\label{large antimatching}
  Let $0 < \eps \ll \eta \ll \delta \ll 1$.  Let $v\in V(G)$ such that $|\subserv(v)\cup \aberrant(v)| < \Gap(v)/2$.  If $M$ is a maximum matching in $\overline{G[\egal(v)]}$, then $|M|\geq\Gap(v)/4$.  Furthermore, if $u\in \egal(v)\setminus (V(M)\cup \slaberrant(v))$ is normal, then
  \begin{equation*}
    \Save_L(u) \leq 3\eta\Gap(v).
  \end{equation*}
\end{prop}
\begin{proof}
  Let $M$ be a maximum matching in $\overline{G[\egal(v)]}$.  By the choice of $M$, the vertices of $\egal(v)\setminus V(M)$ form a clique.  Therefore
  \begin{equation*}
    2|M| + |\subserv(v)\cup\aberrant(v)| + \omega(v) - 1 \geq d(v).
  \end{equation*}
  Similarly, since $u\in \egal(v)\setminus V(M)$,
  \begin{equation}
    \label{u clique lower bound}
    \omega(u) \geq d(v) + 1 - 2|M| - |\subserv(v)\cup\aberrant(v)|.
  \end{equation}
  Hence, since $|\subserv(v)\cup\aberrant(v)| < \Gap(v)/2$,
  \begin{equation*}
    |M| \geq \Gap(v)/4,
  \end{equation*}
  as desired.

  Since no clique in $G[\egal(v)]$ contains an edge in $M$, $\omega(G[\egal(v)]) \leq |\egal(v)| - |M|$.  Note that for any $H\subseteq G[N(v)\cup\{v\}]$, $|V(H)| - \omega(H) \leq \Gap(v)$.  Hence,
  \begin{equation}\label{antimatching upper bound}
    |M| \leq \Gap(v).
  \end{equation}
  By \eqref{u clique lower bound} and \eqref{antimatching upper bound}, since $|\subserv(v)\cup\aberrant(v)| < \Gap(v)/2$,
  \begin{equation}
    \label{u simplified clique lower bound}
    \omega(u) \geq d(v) + 1 - (5/2)\Gap(v).
  \end{equation}
  Since $|L(u)| = d(u) + 1 -\Save_L(u) = \Gap(u) + \omega(u) - \Save_L(u)$, by \eqref{u simplified clique lower bound},
  \begin{equation*}
    |L(u)| \geq \Gap(u) - \Save_L(u) + d(v) + 1 - (5/2)\Gap(v).
  \end{equation*}
  Since $u\notin \slaberrant(v)$, we have $|L(u)| \leq |L(v)| + \delta\Gap(v)/3$.  Hence,
  \begin{equation*}
    |L(v)| + (5/2 + \delta / 3)\Gap(v) \geq \Gap(u) - \Save_L(u) + d(v) + 1.
  \end{equation*}
  Since $d(v) + 1 - |L(v)| = \Save_L(v)$, we have
  \begin{equation}\label{strong egal nbr gap bound}
    \Gap(u) - \Save_L(u) \leq (5/2 + \delta / 3)\Gap(v) - \Save_L(v) \leq 8\Gap(v) / 3.
  \end{equation}
  Since $u$ is normal, by Proposition~\ref{normal Gap linear in save},
  \begin{equation}
    \label{normal strong egal nbr save bound}
    \Gap(u) \geq \frac{\Save_L(v)}{\eta + 2\eps}.
  \end{equation}
  Combining~\eqref{strong egal nbr gap bound} and \eqref{normal strong egal nbr save bound}, we obtain
  \begin{equation*}
    \Save_L(u)\left(\frac{1}{\eta + 2\eps} - 1\right) \leq 8\Gap(v) / 3.
  \end{equation*}
  By rearranging terms in the previous inequality, since $\eps \ll \mu \ll 1$, we obtain the desired bound on $\Save_L(u)$.
\end{proof}

Now we can prove Lemma~\ref{normal refined structure} \ref{normal large egal implies aberrance} and \ref{normal small gap small subserv small lordly}.

\begin{proof}[Proof of Lemma~\ref{normal refined structure} \ref{normal large egal implies aberrance}]
    Let $H = G[\egal(v)]$.  By Proposition~\ref{large antimatching}, there is a maximum matching $M$ in $\overline{H}$ such that $|M|\geq \Gap(v)/4$.
  Since $G$ contains no dense subgraph with respect to $L$,
  \begin{equation}\label{large gap egal-sparse}
    |E(\overline H)| \geq |M|(|V(H)| - |M|) - \sum_{u\in V(H)}\Save_L(u).
  \end{equation}
  Therefore by \eqref{large gap egal-sparse} and Proposition~\ref{egal nbr save bound},
  \begin{equation*}
    |E(\overline H)| \geq |M|(|V(H)| - |M|) - 12\eps |V(H)| d(v) = |V(H)|(|M| - 12\eps d(v)) - |M|^2.
  \end{equation*}

  Note that the function $x \mapsto ax - x^2$ is increasing for $x < a / 2$, and $|M| \leq |V(H)| / 2$.  Thus,
  since $|V(H)| \geq d(v) - \Gap(v)/2$ and $|M| \geq \Gap(v)/4$, the inequality above implies that
  \begin{align*}
    |E(\overline H)| &\geq (d(v) - \Gap(v)/2)(\Gap(v)/4 - 12 \varepsilon d(v)) - (\Gap(v)/4)^2\\
                     &\geq\Gap(v)d(v)\left(1/4 - 6\eps\right) - (3/16)\Gap(v)^2 - 12\eps d(v)^2.
  \end{align*}
  Since $\Gap(v) \leq d(v)$, we have $(3/16)\Gap(v)^2 \leq (3/16)\Gap(v)d(v)$.  Therefore since $\Gap(v) \geq (\delta/4)d(v)$,
  \begin{align*}
    |E(\overline H)| &\geq \Gap(v)d(v)\left(1/16 - 6\eps\right) - 12\eps d(v)^2 \geq d(v)^2\left((\delta/4)(1/16 - 6\eps) - 12\eps\right) \geq \delta d(v)^2 / 65.
  \end{align*}
  Hence, by Proposition~\ref{normal Gap linear in save}, since $\eps \ll \eta\ll \mu \ll \delta$,
  \begin{equation*}
    |E(\overline H)| \geq d(v)\left(\frac{\Save_L(v) + \varepsilon\log^{10}k}{\eta + 2\eps}\right)\delta/65 \geq d(v)\left(\frac{\Save_L(v) + \varepsilon\log^{10}k}{\mu}\right),
  \end{equation*}
  so $v$ is egalitarian-sparse, as desired.
\end{proof}

\begin{proof}[Proof of Lemma~\ref{normal refined structure} \ref{normal small gap small subserv small lordly}]
    By Proposition~\ref{large antimatching}, there is a maximum matching $M$ in $\overline{G[\egal(v)]}$ such that $|M| \geq \Gap(v) / 4$.  Let $H$ be the graph induced by $\overline G$ on vertices in $V(M)$ and vertices $u \in \egal(v)\setminus (V(M)\cup \slaberrant(v))$ such that $\Save_L(u) \leq 3\eta\Gap(v)$.  By Proposition~\ref{large antimatching}, if $u\in \egal(v)\setminus V(H)$, then either $u\in \slaberrant(v)$ or $u$ is heavy.  By the choice of $H$, if $u\in \egal(v)\setminus V(H)$ is heavy, then $\Save_L(u) \geq 3\eta\Gap(v)$.  Hence, since $|\slaberrant(v)| < d(v)/4$ and $|\subserv(v)\cup\aberrant(v)| < \Gap(v)/2 \leq \delta d(v) / 8 \leq d(v) / 4$, we have
  \begin{equation}
    \label{small gap H lower bound}
    |V(H)| \geq d(v)/4,
  \end{equation}
  or else $v$ has at least $d(v)/2$ heavy egalitarian neighbors with $\Save_L(u) \geq 3\eta\Gap(v)$ and $d(u) \leq |L(u)|/(1 - 11\varepsilon) \leq \frac{|L(v)| + \delta\Gap(v)/3}{1 - 11\varepsilon} \leq \frac{1 + \delta^2/12}{1 - 11\varepsilon}d(v) \leq (1 + \delta)d(v)$ and is thus sponsored, as desired.

  Since $G$ contains no dense subgraph with respect to $L$,
  \begin{equation}\label{small gap egal-sparse}
    |E(\overline H)| \geq |M|(|V(H)| - |M|) - \sum_{u\in V(H)}\Save_L(u).
  \end{equation}
  By the choice of $H$,
  \begin{equation}
    \label{not matching save bound}
    \sum_{u\in V(H)\setminus V(M)}\Save_L(u) \leq (|V(H)| - 2|M|)3\eta\Gap(v).
  \end{equation}
  By Proposition~\ref{egal nbr save bound},
  \begin{equation}\label{matching save bound}
    \sum_{u\in V(M)}\Save(u) \leq 24\varepsilon|M|d(v).
  \end{equation}
  By~\eqref{small gap egal-sparse}, \eqref{not matching save bound}, and \eqref{matching save bound},
  \begin{equation}
    \label{small gap combined sparsity bound}
    |E(\overline H)| \geq (|V(H)| - |M|)\left(|M| - 3\eta\Gap(v)\right) - 24\varepsilon|M|d(v).
  \end{equation}
  By~\eqref{small gap H lower bound} and \eqref{small gap combined sparsity bound}, since $\Gap(v)/4 \leq |M| \leq \Gap(v) \leq (\delta/4)d(v)$,
  \begin{equation*}
    |E(\overline H)| \geq \left(\frac{1 - \delta}{4}\right)\left(\frac{1}{4} - 3\eta - 24\varepsilon\right)\Gap(v)d(v) \geq \left(\frac{1 - \delta}{17}\right)\Gap(v)d(v).
  \end{equation*}
  Hence, by Proposition~\ref{normal Gap linear in save}, since $\eps \ll \eta \ll \mu \ll \delta$,
  \begin{equation*}
    |E(\overline H)| \geq \left(\frac{1 - \delta}{17}\right)d(v)\frac{\Save_L(v) + \varepsilon \log^{10}k}{\eta + 2\eps} \geq \frac{\Save_L(v) + \varepsilon \log^{10}k}{\mu},
  \end{equation*}
  so $v$ is egalitarian-sparse, as desired.
\end{proof}

Now we use Lemma~\ref{normal refined structure} to prove Lemma~\ref{normal vertex structure lemma}.

\begin{proof}[Proof of Lemma~\ref{normal vertex structure lemma}]
  First we prove (a).  Assume $v$ is not very lordly.  Hence, $|\subserv(v)| \leq \Gap(v) / 4$.  By Lemma~\ref{normal refined structure} \ref{normal and subservient implies aberrance}, we may assume $|\aberrant(v)| < \Gap(v) / 4$, or else $v$ is aberrant, as desired.  Hence, $|\subserv(v)\cup\aberrant(v)| < \Gap(v)/2$.  Therefore, by Lemma~\ref{normal refined structure} \ref{normal large egal implies aberrance}, $v$ is egalitarian-sparse, as desired.
  
  Next we prove (b).
  By Lemma~\ref{normal refined structure} \ref{normal and subservient implies aberrance}, we may assume $|\aberrant(v)| < \Gap(v) / 4$, or else $v$ is aberrant, as desired.  By Lemma~\ref{normal refined structure} \ref{normal small gap and large subservient implies bipartite-sparse}, we may assume $|\subserv(v)| \leq \Gap(v)/4$, or else $v$ is bipartite-sparse, as desired.  Therefore $|\subserv(v)\cup\aberrant(v)| < \Gap(v)/2$.  Hence, by Lemma~\ref{normal refined structure} \ref{normal large egal implies aberrance}, $v$ is egalitarian-sparse, as desired.

  Now we prove (c).
  By Lemma~\ref{normal refined structure} \ref{normal and subservient implies aberrance}, we may assume that $|\aberrant(v)| < \Gap(v) / 4$ and also $|\slaberrant(v)| < d(v) / 4$, or else $v$ is aberrant or slightly aberrant, as desired.  By Lemma~\ref{normal refined structure} \ref{normal small gap and large subservient implies bipartite-sparse}, we may assume $|\subserv(v)| \leq \Gap(v)/4$, or else $v$ is bipartite-sparse, as desired.  Therefore $|\subserv(v)\cup\aberrant(v)| < \Gap(v)/2$.  By Lemma~\ref{normal refined structure} \ref{normal small gap small subserv small lordly}, $v$ is either egalitarian-sparse or sponsored, as desired.
\end{proof}

In the remainder of this subsection, we prove Lemma~\ref{in big clique with many very lordly nbrs is aberrant}.  First we need the following proposition, which implies that many very lordly neighbors of a vertex $v$ that is heavy or sponsored are lordlier. 
\begin{prop}\label{very lordly nbr in big clique is lordlier}
  Let $0 < \eps \ll \delta \ll 1$.  Let $v\in V(G)$, and let $u\in N(v)$ be a very lordly vertex.  If $\omega(u) \geq (1 - \delta/4)d(v) + 1$, then $u\in \aberrant(v)$.
\end{prop}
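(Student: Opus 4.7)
The goal is to verify the defining inequality $|L(u)| \ge (1+\alpha)|L(v)|$ that places $u$ in $\aberrant(v)$. My plan is to leverage the very lordly hypothesis on $u$ to produce a lower bound on the ratio $d(u)/d(v)$, and then convert it to a ratio of list sizes using the standard ``list size is essentially the degree'' estimate from Proposition~\ref{list size proposition}\ref{all lists are linear in d}.

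First, I would unpack the very lordly condition $\Gap(u) \ge (3\delta/4) d(u)$ in the form
\[
  \omega(u) - 1 \;=\; d(u) - \Gap(u) \;\le\; \left(1 - \tfrac{3\delta}{4}\right) d(u),
\]
and combine it with the hypothesis $\omega(u) - 1 \ge (1 - \delta/4)\,d(v)$ to obtain
\[
  d(u) \;\ge\; \frac{4 - \delta}{4 - 3\delta}\, d(v).
\]

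Next, since the standing assumption of Section~\ref{discharging section} precludes extremely heavy vertices, Proposition~\ref{list size proposition}\ref{all lists are linear in d} gives $|L(u)| \ge (1 - 11\varepsilon) d(u)$. Under the parameter calibration $11\varepsilon \le \varepsilon'$ used throughout this section (this is the same calibration implicit in Proposition~\ref{egal nbr save bound}), this sharpens to $|L(u)| \ge (1-\varepsilon')\,d(u)$. Combining with the trivial upper bound $|L(v)| \le d(v)$ and the displayed degree bound yields
\[
  \frac{|L(u)|}{|L(v)|} \;\ge\; \frac{(1-\varepsilon')(4-\delta)}{4-3\delta}.
\]

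Finally, a short algebraic rearrangement shows that the inequality $\frac{(1-\varepsilon')(4-\delta)}{4-3\delta} \ge 1+\alpha$ is equivalent to $\alpha \le \frac{\delta(2+\varepsilon') - 4\varepsilon'}{4-3\delta}$, which is precisely the hypothesis~\eqref{alpha constraint}. Thus $|L(u)| \ge (1+\alpha)|L(v)|$, so $u \in \aberrant(v)$, as desired. The argument is entirely elementary; the only delicate point is keeping the $\varepsilon$-parameters straight, so that Proposition~\ref{list size proposition}\ref{all lists are linear in d} plugs cleanly into the arithmetic encoded by \eqref{alpha constraint}, and I do not foresee any real obstacle beyond this bookkeeping.
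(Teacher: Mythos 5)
Your proof is correct and follows essentially the same route as the paper's: both derive $(1-3\delta/4)\,d(u) \geq (1-\delta/4)\,d(v)$ from the very lordly hypothesis and the clique bound, both convert this via Proposition~\ref{list size proposition}\ref{all lists are linear in d} and $|L(v)|\leq d(v)$ into $|L(u)|/|L(v)| \geq (1-\varepsilon')(4-\delta)/(4-3\delta)$, and both close with the same algebraic identity showing this ratio equals $1 + \frac{\delta(2+\varepsilon')-4\varepsilon'}{4-3\delta}$. The only differences are cosmetic (you clear the denominators by $4$, and you make the $11\varepsilon = \varepsilon'$ calibration explicit rather than tacit).
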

\begin{proof}
  Since $d(u) + 1 = \Gap(u) + \omega(u)$, $\Gap(u) \geq (3\delta/4)d(u)$, and $\omega(u) \geq (1 - \delta/4)d(v) + 1$, we have $d(u) + 1 \geq (3\delta/4)d(u) + (1 - \delta/4)d(v) + 1$.  Hence,
  \begin{equation*}
    (1 - 3\delta/4)d(u) \geq (1 - \delta/4)d(v) \geq (1 - \delta/4)|L(v)|.
  \end{equation*}
  By Proposition~\ref{list size proposition} \ref{all lists are linear in d} and the previous inequality,
  \begin{equation*}
    |L(u)| \geq \frac{(1 - 11\varepsilon)(1 - \delta/4)}{1 - 3\delta/4}|L(v)|.
  \end{equation*}
  Note that since $\eps \ll \delta \ll 1$,
  \begin{equation*}
    \frac{(1 - 11\varepsilon)(1 - \delta/4)}{1 - 3\delta/4} = 1 + \frac{\delta(2 + 11\varepsilon) - 44\varepsilon}{4 - 3\delta} \geq 1 + \delta / 3.
  \end{equation*}
  By the previous two inequalities, $|L(u)| \geq (1 + \delta / 3)|L(v)|$, so $u\in \aberrant(v)$, as desired.
\end{proof}

\begin{proof}[Proof of Lemma~\ref{in big clique with many very lordly nbrs is aberrant}]
  Since $\Gap(v) \leq (\delta/4)d(v)$, we have $\omega(v) \geq (1 - \delta/4)d(v) + 1$.  By assumption, $v$ has at least $(\Save_L(v) + \varepsilon\log^{10}k)) / \mu$ very lordly neighbors $u$ such that $\omega(u) \geq \omega(v)$.  Hence, by Proposition~\ref{very lordly nbr in big clique is lordlier}, each such very lordly neighbor $u$ is in $\aberrant(v)$.  Therefore $v$ is aberrant, as desired.
\end{proof}

\subsection{Discharging}\label{discharging subsection}

In this subsection we prove Lemma~\ref{stronger discharging lemma} using discharging.  We redistribute the charges sequentially according to the following rules.  Let $v\in D$.
\begin{enumerate}[(R1)]
\item If $v$ is heavy, then $v$ sends $9\varepsilon$ charge to each neighbor not in $D$.  Denote the new charges $\ch_1$.
\item If $v$ is heavy, then $v$ sends $\ch(v)/(2(|N(v)\cap D|))$ to each neighbor in $D$.  Denote the new charges $\ch_2$.
\item[] If $v$ is normal, then
\item $v$ sends $9\varepsilon$ to each neighbor in $S_\infty$, and
\item $v$ sends $9\varepsilon$ to each neighbor in $\mathcal L$.
\end{enumerate}
Denote the final charges $\ch_*$.  
\begin{prop}
  If $v\in S_\infty \cup \mathcal L$, then the final charge is
  \begin{equation*}
    \ch_*(v) = \ch(v) + 9\varepsilon(|N(v)\cap D|).
  \end{equation*}
\end{prop}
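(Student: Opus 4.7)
The plan is to unpack the discharging rules (R1)--(R4) and verify directly that $v$ sends no charge at all, while each neighbor of $v$ lying in $D$ contributes exactly $9\varepsilon$ to $v$. The first observation is that, since $D = V(G) \setminus (S_\infty \cup \mathcal{L})$, the hypothesis $v \in S_\infty \cup \mathcal{L}$ means $v \notin D$. All four rules are triggered only by vertices of $D$, so $v$ itself transmits no charge under any rule.

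Next, I would fix a neighbor $u \in N(v) \cap D$ and split on whether $u$ is heavy or normal. If $u$ is heavy, rule (R1) sends $9\varepsilon$ to each neighbor outside $D$; since $v \notin D$ this contributes $9\varepsilon$ to $v$, while (R2) only sends to neighbors inside $D$ and contributes nothing. If instead $u$ is normal, then (R3) sends $9\varepsilon$ to each neighbor in $S_\infty$ and (R4) sends $9\varepsilon$ to each neighbor in $\mathcal{L}$; since $v$ lies in $S_\infty \cup \mathcal{L}$, exactly one of these rules fires and sends $9\varepsilon$ to $v$. A point worth flagging (the only place there is even potential for error) is that $S_\infty$ and $\mathcal{L}$ are disjoint by the very definition of $\mathcal{L}$ as the very lordly vertices not in $S_\infty$, so no double counting can occur. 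Summing $9\varepsilon$ over all $u \in N(v) \cap D$ yields $ch_*(v) = ch(v) + 9\varepsilon\,|N(v) \cap D|$.

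There is no real obstacle here: this proposition is a routine bookkeeping check on the discharging rules rather than a substantive argument. The only things to be careful about are remembering that $v \notin D$ so $v$ sends nothing, and that $S_\infty \cap \mathcal{L} = \varnothing$ so the normal-sender case is unambiguous.
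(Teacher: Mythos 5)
Your proof is correct and follows essentially the same approach as the paper's: observe that $v\notin D$ (so $v$ sends nothing), then verify that each neighbor $u\in N(v)\cap D$ sends exactly $9\varepsilon$ to $v$ — via (R1) if $u$ is heavy, and via (R3) or (R4) if $u$ is normal, according to whether $v\in S_\infty$ or $v\in\mathcal L$. You spell out a couple of details the paper leaves implicit (that $v$ itself sends no charge, and that $S_\infty\cap\mathcal L=\varnothing$), but the underlying argument is identical.
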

\begin{proof}
  If $v\in S_\infty$, then $v$ receives $9\varepsilon$ charge from each neighbor in $D$ under R1 and R3.  If $v\in \mathcal L$, then $v$ receives $9\varepsilon$ charge from each neighbor in $D$ under R1 and R4.
\end{proof}

Our aim is to show that for each vertex $v$ in $D$, we have $\ch_*(v) > 0$.  The next lemma implies this result for heavy vertices in $D$.

\begin{lemma}\label{heavy sends less than half charge outside D}
  Let $0 < \eps \ll \eta \ll \mu \ll \delta \ll 1$.  If $v\in D$ is heavy, then $\ch_1(v) > \ch(v)/2$. 
\end{lemma}
\begin{proof}
  It suffices to show that $v$ sends less than $\ch(v)/2$ charge under R1.  Since $v\notin S_\infty$, at most $(\Save_L(v) + \varepsilon\log^{10}k) / \mu$ neighbors of $v$ are in $S_\infty$, so by Proposition~\ref{heavy vertex facts} \ref{heavy has small gap} and Lemma~\ref{in big clique with many very lordly nbrs is aberrant}, $v$ has at most $(\Save_L(v) + \varepsilon\log^{10}k)/\mu + \Gap(v)$ neighbors in $\mathcal L$.  Therefore $v$ sends at most $9\varepsilon\left(2\left(\Save_L(v) + \varepsilon\log^{10}k\right)/\mu + \Gap(v)\right)$ charge under R1.

  Since $\ch(v) \geq \eta \Gap(v)$, we have $9\varepsilon\Gap(v) \leq 9\eps\ch(v) / \eta < \ch(v)/4$.  By Proposition~\ref{heavy vertex facts} \ref{heavy charge linear in save}, $18\varepsilon\left(\Save_L(v) + \varepsilon\log^{10}k\right) / \mu \leq 18\eps(1 + \eta)\ch(v) / \mu < \ch(v) / 4.$
  Therefore $v$ sends at most $\ch(v)/2$ charge under R1, as desired.
\end{proof}

Now we show that normal vertices in $D$ also have positive final charge.
\begin{lemma}\label{normal in D has positive final charge}
  Let $0 < \eps \ll \eta \ll \mu \ll \delta \ll 1$.  If $v\in D$ is normal, then $\ch_*(v) > 0$.
\end{lemma}
\begin{proof}
  By Lemma~\ref{normal vertex structure lemma}, $\Gap(v) < (\delta/4)d(v)$ and $v$ is sponsored, that is there is a set $X$ of at least $d(v)/2$ heavy neighbors $u$ with $\Save_L(u) \geq 3\eta\Gap(v)$.

  Since $v\notin S_\infty$, at most $(\Save_L(v) + \varepsilon\log^{10}k) / \mu$ neighbors of $v$ are in $S_\infty$.  In particular,
  \begin{equation}\label{not many saved heavy nbrs}
    |X\cap S_\infty| \leq \frac{\Save_L(v) + \varepsilon\log^{10}k}{\mu}.
  \end{equation}
  By Proposition~\ref{normal Gap linear in save}, since $\Gap(v) < (\delta/4)d(v)$,
  \begin{equation}\label{save small compared to degree}
    \Save_L(v) + \varepsilon\log^{10}k < (\eta + 2\eps)(\delta/4)d(v) < \eta d(v).
  \end{equation}
  Combining~\eqref{not many saved heavy nbrs} and \eqref{save small compared to degree}, since $|X| \geq d(v)/2$,
  \begin{equation}\label{most heavy nbrs in D}
    |X\setminus S_\infty| > \left(\frac{1}{2} - \frac{\eta}{\mu}\right)d(v) \geq d(v) / 3.
  \end{equation}

  By Proposition~\ref{heavy vertex facts} \ref{heavy charge linear in save}, each vertex $u\in X\setminus S_\infty$ sends at least
  \begin{equation}\label{charge from heavy nbrs}
    \frac{\ch(u)}{2d(u)} \geq \frac{\Save_L(u)}{2(1 + \eta)d(u)} \geq \frac{3\eta\Gap(v)}{2(1 + \eta)(1 + \delta)d(v)} \geq \frac{\eta\Gap(v)}{d(v)}
  \end{equation}
  charge under R2.

  By~\eqref{most heavy nbrs in D} and \eqref{charge from heavy nbrs}, $v$ receives greater than $\eta\Gap(v) / 3$
  charge under R2.  Since $ch(v) \geq -2\varepsilon\Gap(v)$,
  \begin{equation}
    \label{normal charge received}
    \ch_2(v) > (\eta / 3 - 2\eps)\Gap(v) > \eta\Gap(v) / 4.
  \end{equation}

  By Lemma~\ref{in big clique with many very lordly nbrs is aberrant}, $v$ has at most $(\Save_L(v) + \varepsilon\log^{10}k) / \mu + \Gap(v)$ neighbors in $L$.  Since $v$ has at most $(\Save_L(v) + \varepsilon\log^{10}k) / \mu$ neighbors of $v$ are in $S_\infty$, $v$ sends at most
  \begin{equation*}
    9\varepsilon\left(2\left(\Save_L(v) + \varepsilon\log^{10}k\right) / \mu + \Gap(v)\right)
  \end{equation*}
  charge under R3 and R4.  Hence, by Proposition~\ref{normal Gap linear in save}, 
  \begin{equation}
    \label{normal charge sent}
    \ch_2(v) - \ch_*(v) \leq 9\varepsilon\left(1 + \eta + 2\eps\right)\Gap(v) \leq 10\eps \Gap(v).
  \end{equation}
  By combining~\eqref{normal charge received} and \eqref{normal charge sent}, $\ch_*(v) > \left(\eta /4 - 10\eps\right)\Gap(v) \geq 0$,
  as desired.
\end{proof}

We can finally prove Lemma~\ref{stronger discharging lemma}.
\begin{proof}[Proof of Lemma~\ref{stronger discharging lemma}]

  Recall that
  \begin{equation}\label{final charge still at most 0}
    \sum_{v\in V(G)}\ch_*(v) = \sum_{v\in V(G)}\ch(v) \leq 0.
  \end{equation}
  By Lemmas~\ref{heavy sends less than half charge outside D} and \ref{normal in D has positive final charge}, if $v\in D$, then $\ch_*(v) > 0$.  Therefore $\sum_{v\in V(G)\setminus D}\ch_*(v) > 0$, and thus $D\subsetneq V(G)$.  Note that for each $v\in V(G)\setminus D$, we have
  \begin{multline}\label{charge pays}
    \ch_*(v) = \ch(v) + 9\varepsilon|N(v)\cap D| \\
    \geq \Save_L(v) + \varepsilon\log^{10}k - 2\varepsilon\Gap_{G - D}(u) + 7\varepsilon(k - |L(v)| +|N(v)\cap D|).
  \end{multline}
  Combining~\eqref{final charge still at most 0} and \eqref{charge pays}, we have
  \begin{equation*}
    \sum_{v\in V(G)\setminus D}(\Save_L(v) + \varepsilon\log^{10} k) \leq \sum_{v\in V(G)\setminus D}(2\varepsilon\Gap_{G - D} - 7\varepsilon(k - |L(v)| + |N(v)\cap D|),
  \end{equation*}
  as desired.
\end{proof} 

\subsection{Proof of Lemma~\ref{inductive discharging lemma}}\label{proof of discharging lemma section}

We conclude this section with the proof of Lemma~\ref{inductive discharging lemma} using Lemmas~\ref{no extremely heavy} and \ref{stronger discharging lemma}.

\begin{proof}[Proof of Lemma~\ref{inductive discharging lemma}]
  Let $D, \mathcal L$, and $S_\infty$ be as defined in Lemma~\ref{stronger discharging lemma}.  By Lemma~\ref{no extremely heavy}, we may assume $G$ has no extremely heavy vertices, or else (b) holds, as desired.  Therefore, by Lemma~\ref{stronger discharging lemma}, $D = \varnothing$, or else (b) holds, as desired.  We claim that $\mathcal L = \varnothing$.  Suppose not, and let $v\in \mathcal L$ such that $|L(v)|$ is minimum.  By the choice of $v$, $\subserv(v) \subseteq S_\infty$.  Since $v$ is very lordly, $|\subserv(v)| \geq \Gap(v)/4$.  By Proposition~\ref{normal Gap linear in save}, $v$ has at least $(\Save_L(v) + \varepsilon\log^{10}k) / (4\eta + 8\eps) \geq (\Save_L(v) + \varepsilon\log^{10}k) / \mu$ neighbors in $S_\infty$, so $v\in S_\infty$, a contradiction.  Hence, $\mathcal L = \varnothing$, as claimed.  Therefore $S_\infty = V(G)$.

  Let $\ordering$ be an ordering of $V(G)$ satisfying the following: If $u\in S_i$ and $v\in S_j$ such that $i > j$, let $u\ordering v$.  By the construction of the sets $S_i$, every vertex $v\in V(G)$ is either aberrant, slightly aberrant, egalitarian-sparse, bipartite-sparse, or $\ordering$-prioritized.  Since $|L(v)| \leq k$ and $(1 - 11\varepsilon)d(v) \leq |L(v)| \leq d(v)$ for each vertex $v$, the graph $G$ is saved with respect to $L$ and $k$, as desired.
\end{proof}

\section{Putting it all together}\label{proof section}

In this section we prove our main technical result, Theorem~\ref{local critical thm}, and we derive Theorems~\ref{nice local critical thm} and \ref{main avg degree bound}.  We restate these results for the reader's convenience.
The proof of Theorem~\ref{epsilon mad reeds} using Theorem~\ref{main avg degree bound} is straightforward and the same argument can be found in~\cite[p.~217]{KP18}, so we omit it.

\techThm*
\begin{proof}
  Let $G$ be an $L$-critical graph for a list assignment $L$ such that for each vertex $v\in V(G)$, we have $|L(v)| \leq k$ where $k$ is sufficiently large as in Theorem~\ref{saved is colorable}, and suppose for a contradiction that~\eqref{main thm equation} does not hold.  Since $G$ is $L$-critical, by~\cite[Theorem 4.1]{KP18}, $G$ does not contain a subgraph that is dense with respect to $L$.  Moreover, for each vertex $v\in V(G)$, we have $|L(v)| \leq d(v)$.  We may assume $G$ is not saved with respect to $L$ and $k$ by Theorem~\ref{saved is colorable}.

  Therefore, by Lemma~\ref{inductive discharging lemma}, there is a nonempty set $D\subsetneq V(G)$ such that
  \begin{equation*}
    \sum_{v\in V(G - D)}\Save_L(v) + \varepsilon\log^{10}k\leq \sum_{v\in V(G - D)}2\varepsilon\Gap_{G - D}(v) - 7\varepsilon(k - |L(v)| + |N(v)\cap D|).
  \end{equation*}
  Subject to that, we choose $D$ to have maximum cardinality.

  We claim that the subgraph $G - D$ is saved with respect to $L$ and $k$.  To that end, suppose $\phi$ is an $L$-coloring of $G[D]$, and let $L'(v) = L(v)\setminus(\cup_{u\in N(v)\cap D}\phi(u))$ for each vertex $v\in V(G - D)$.  Note that $\Save_{L'}(v) \leq \Save_L(v)$ for each vertex $v$.  We assume that equality holds for each vertex, by possibly removing colors from $L'(v)$ arbitrarily.  This assumption also implies that $|L'(v)| = |L(v)| - |N(v) \cap D|$ for each vertex $v$, so \eqref{main thm equation} does not hold for $G - D$ and $L'$.  Thus, $G - D$ and $L'$ satisfy the hypotheses of Lemma~\ref{inductive discharging lemma}.  By the choice of $D$, (b) does not hold.  Hence, by Lemma~\ref{inductive discharging lemma}, $G - D$ is saved with respect to $L'$ and $k$ and thus with respect to $L$ and $k$, as claimed.  
  
  Since $G$ is $L$-critical, there is an $L$-coloring $\phi$ of $G[D]$.  Since $G - D$ is saved with respect to $L'$ and $k$ where $L'(v) = L(v)\setminus(\cup_{u\in N(v)\cap D}\phi(u))$, by Theorem~\ref{saved is colorable}, $G - D$ is $L'$-colorable.  By combining an $L'$-coloring of $G - D$ with $\phi$, we obtain an $L$-coloring of $G$, contradicting that $G$ is $L$-critical.
\end{proof}

Next we show how Theorem~\ref{nice local critical thm} follows from Theorem~\ref{local critical thm}.
\localThm*
\begin{proof}
  Let $G$ be an $L$-critical graph for some list assignment satisfying $|L(v)| = k - 1$ and $\Gap(v) \geq \log^{10}k$ for each $v\in V(G)$ where $k - 1$ is sufficiently large to apply Theorem~\ref{local critical thm}.  By Theorem~\ref{local critical thm} applied with $k - 1$, we have
  \begin{equation*}
    \sum_{v\in V(G)}(\Save_L(v) + \varepsilon\log^{10}(k - 1)) \geq \sum_{v\in V(G)} 2\varepsilon \Gap(v) - 7\varepsilon(k - 1 - |L(v)|).
  \end{equation*}
  Since $\Gap(v) \geq \log^{10}(k - 1)$ and $|L(v)| = k - 1$, the result follows.
\end{proof}

We conclude by proving Theorem~\ref{main avg degree bound}.
\mainThm*
\begin{proof}
  Let $G$ be an $L$-critical graph for some list assignment where $|L(v)| = k - 1$ for each $v\in V(G)$, $\omega(G) \leq k - \log^{10}k$ and $k$ is sufficiently large to apply Theorem~\ref{nice local critical thm}.  Since $G$ is $L$-critical, $d(v) \geq k - 1$ for each $v\in V(G)$.  Therefore for each vertex $v$, since $\omega(v) \leq \omega(G) \leq k - \log^{10}k$, we have $\Gap(v) \geq \log^{10}k$.

  Let $\varepsilon' = \varepsilon/(1 + \varepsilon)$, and note that $\varepsilon' \leq \eps$.  By Theorem~\ref{nice local critical thm},
  \begin{equation*}
    \sum_{v\in V(G)}\Save_L(v) \geq \sum_{v\in V(G)}\varepsilon'\Gap(v).
  \end{equation*}
  Since $\Save_L(v) = d(v) + 1 - (k - 1)$ and $\Gap(v) = d(v) + 1 - \omega(v)$, by rearranging terms in the previous inequality, we obtain $\sum_{v\in V(G)}(1 - \varepsilon')(d(v) + 1) + \varepsilon'\omega(v) \geq (k - 1)|V(G)|.$  Rearranging terms again, we have
  \begin{equation*}
    \ad(G) \geq \frac{k - 1 - \varepsilon'\sum_{v\in V(G)}\omega(v)/|V(G)|}{1 - \varepsilon'} - 1 = (1 + \varepsilon)(k - 1) - \varepsilon\sum_{v\in V(G)}\omega(v)/|V(G)| - 1.
  \end{equation*}
  Since $\sum_{v\in V(G)}\omega(v)/|V(G)| \leq \omega(G)$, the result follows.
\end{proof}

\section*{Acknowledgements}

We thank the anonymous referees for their careful reading of this paper and their suggestions.



\end{document}